\documentclass[11pt]{article}

\usepackage[margin=1.08in]{geometry}
\usepackage{amsmath,amssymb,amsthm,mathtools}
\usepackage{microtype}
\usepackage{enumitem}
\usepackage{booktabs}
\usepackage{hyperref}
\usepackage[nameinlink,capitalize]{cleveref}
\usepackage{xcolor}

\hypersetup{
  colorlinks=true,
  linkcolor=blue!55!black,
  citecolor=blue!55!black,
  urlcolor=blue!55!black,
  pdftitle={A proof of the fractional Lane--Emden conjecture},
  pdfauthor={Linfeng Mei and Juncheng Wei}
}
\numberwithin{equation}{section}
\newtheorem{theorem}{Theorem}[section]

\newtheorem{proposition}[theorem]{Proposition}
\newtheorem{lemma}[theorem]{Lemma}
\newtheorem{corollary}[theorem]{Corollary}
\theoremstyle{definition}

\theoremstyle{remark}\newtheorem{remark}[theorem]{Remark}

\newcommand{\R}{\mathbb{R}}
\newcommand{\Sph}{\mathbb{S}}
\newcommand{\E}{\mathbb{E}}
\newcommand{\Pp}{\mathbb{P}}
\newcommand{\dd}{\,\mathrm{d}}

\newcommand{\Id}{\mathrm{Id}}
\newcommand{\eps}{\varepsilon}

\title{A proof of the fractional Lane--Emden conjecture}
\author{Linfeng Mei\thanks{Zhejiang Normal University. E-mail: \href{mailto:lfmei@zjnu.edu.cn}{lfmei@zjnu.edu.cn}.} \and Juncheng Wei\thanks{The Chinese University of Hong Kong. E-mail: \href{mailto:wei@math.cuhk.edu.hk}{wei@math.cuhk.edu.hk}.}}
\date{October 7, 2026}

\begin{document}
\maketitle

\begin{abstract}
Let $n\ge2$, $0<s<1$, and $p,q>0$. We prove a Liouville theorem for the fractional Lane--Emden system
\[
 (-\Delta)^s u=v^p,\qquad (-\Delta)^s v=u^q, \qquad u, v>0 
 \qquad\text{in }\R^n.
\]
More precisely, we show that the system does not have a strictly positive entire solution whenever
\[
 \frac1{p+1}+\frac1{q+1}>\frac{n-2s}{n}.
\]
No radial symmetry, global boundedness, finite-energy assumption, or prescribed decay at infinity is required. Our proof adapts the localized potential-kernel virial and source-layer interval-pressure method recently introduced for the classical H\'enon--Lane--Emden system. The basic reason the method also works in the fractional setting is simple: the Riesz kernel associated with $(-\Delta)^s$ has the same logarithmic derivative structure as the Newton kernel, with $n-2$ replaced by $n-2s$. However, two genuinely nonlocal points require special treatments. When $2s\le1$, the individual kernel-gradient terms are not locally absolutely integrable, so the two equations must be combined before the diagonal limit is taken. When $2s>n-1$, the kernel restricted to a line is not integrable at infinity, and the interval comparison must therefore be formulated using nonnegative extended integrals. After these issues are resolved, a localized virial identity, a one-dimensional interval-pair inequality, and an averaged source-layer estimate yield a universal local energy bound. The subcritical scaling then forces the same energy to diverge, giving the desired contradiction.
\end{abstract}

\medskip
\noindent\textbf{Keywords.} fractional Lane--Emden system; Riesz potential; Liouville theorem; interval inequality; localized virial identity; source layers.

\medskip
\noindent\textbf{2020 Mathematics Subject Classification.} 35R11, 35B53, 45G05, 35J91.

\section{Introduction}
\setcounter{equation}{0}
For $0<s<1$, let
\[
 K_s(x)=c_{n,s}|x|^{2s-n},\qquad x\in\R^n\setminus\{0\},
\]
where $c_{n,s}>0$ is chosen so that $(-\Delta)^sK_s=\delta_0$ in the sense of distributions \cite{Landkof1972,SteinWeiss1958}. (The precise normalization does not matter.) We begin with the unweighted fractional Lane--Emden system in its integral form
\begin{equation}\label{eq:system}
 \begin{cases}
 u=K_s*v^p,\\
 v=K_s*u^q,
 \end{cases}
 \qquad u,v>0\quad\text{in }\R^n.
\end{equation}
The critical hyperbola associated with \eqref{eq:system} is
\begin{equation}\label{eq:critical-hyperbola}
 \frac1{p+1}+\frac1{q+1}=\frac{n-2s}{n}.
\end{equation}
This integral system is closely tied to the Hardy--Littlewood--Sobolev inequality \cite{DouZhu2015,Lieb1983,SteinWeiss1958,Villavert2015}. In the standard whole-space setting it is also equivalent to the fractional PDE system
\[
 (-\Delta)^s u=v^p,\qquad (-\Delta)^s v=u^q \qquad u, v>0 
 \qquad\text{in }\R^n.
\]
Since we shall work mainly with the potential formulation, we first make the short argument which shows equivalence between the two forms. We use the usual whole-space class
\[
 \mathcal L_s(\R^n)
 :=\left\{h\in L^1_{\rm loc}(\R^n):
 \int_{\R^n}\frac{|h(x)|}{1+|x|^{n+2s}}\,\dd x<\infty\right\},
\]
and the standard pointwise definition \cite{DiNezza2012}
\[
 (-\Delta)^s h(x)
 =C_{n,s}\,\operatorname{P.V.}\int_{\R^n}
 \frac{h(x)-h(y)}{|x-y|^{n+2s}}\,\dd y.
\]

\begin{proposition}[PDE and potential forms]\label{prop:pde-potential}
Let $u,v>0$ belong to $C^{1,1}_{\rm loc}(\R^n)\cap\mathcal L_s(\R^n)$ and satisfy
\[
 (-\Delta)^s u=v^p,\qquad (-\Delta)^s v=u^q
 \qquad\text{in }\R^n.
\]
Then
\begin{equation}\label{eq:pde-to-potential}
 u=K_s*v^p,\qquad v=K_s*u^q
 \qquad\text{in }\R^n.
\end{equation}
Conversely, every finite positive solution of \eqref{eq:pde-to-potential} solves the fractional system in the distributional sense; after the local regularity established below, the equations hold in the usual pointwise sense whenever the pointwise definition of the fractional Laplacian is used.
\end{proposition}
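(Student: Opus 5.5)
The forward direction (PDE $\Rightarrow$ potential form) is a Liouville-type argument for nonnegative $s$-superharmonic functions. The plan is to show that $w:=u-K_s*v^p$ is well defined, is $s$-harmonic, is bounded below, and therefore equals a nonnegative constant. That constant is then removed by feeding it back into the second equation.

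The first step is to prove that $K_s*v^p$ is finite. For $R>0$ let $\eta_R$ solve $(-\Delta)^s\eta_R=\chi_{B_R}v^p$ via $\eta_R=K_s*(\chi_{B_R}v^p)$. This function is continuous, tends to $0$ at infinity, and lies in $\mathcal L_s$. The difference $u-\eta_R$ satisfies $(-\Delta)^s(u-\eta_R)\ge0$ in $\R^n$ pointwise; here we use $C^{1,1}_{\rm loc}$ regularity of $u$ and local boundedness of $v^p$, which gives $\eta_R\in C^{\alpha}$ and enough regularity for a weak/viscosity maximum principle. Moreover $\liminf_{|x|\to\infty}(u-\eta_R)\ge0$ because $u>0$. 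The maximum principle for $(-\Delta)^s$ on $\R^n$ (applied on large balls with exterior data, using $u>0$ outside) then gives $u\ge\eta_R$. Letting $R\to\infty$ and using monotone convergence yields $u\ge K_s*v^p=:U$, so $U<\infty$ everywhere, $U\in\mathcal L_s$ (since $u$ is), and $(-\Delta)^sU=v^p$ in the distributional sense.

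Next set $w=u-U\ge0$. Then $w\in\mathcal L_s$ and $(-\Delta)^s w=0$ in $\mathcal D'(\R^n)$. The classical Liouville theorem for nonnegative (or merely $\mathcal L_s$ and bounded below) $s$-harmonic distributions on $\R^n$ (Bogdan–Kulczycki–Nowak, Fall, Chen–D'Ambrosio–Li) gives $w\equiv c_1\ge0$; the distributional version follows by mollification, since $w*\rho_\eps$ is smooth, nonnegative and $s$-harmonic. Similarly $v=K_s*u^q+c_2$. The main point is to show $c_1=c_2=0$. Suppose $c_1>0$. Then $u\ge c_1$, so $u^q\ge c_1^q$, and $K_s*u^q\ge c_1^q\int_{\R^n}K_s=\infty$ since $2s-n>-n$. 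This contradicts the finiteness of $v$. Hence $c_1=0$, and symmetrically $c_2=0$.

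For the converse, let $(u,v)$ be a finite positive solution of \eqref{eq:pde-to-potential}. Finiteness at one point forces $\int v^p(1+|y|)^{2s-n}\dd y<\infty$, which implies $u\in\mathcal L_s$ (the decay $|x|^{2s-n}$ is integrable against $(1+|x|)^{-n-2s}$). The distributional identity $(-\Delta)^s(K_s*f)=f$ for such $f$ then follows from Fubini, by testing against $\varphi\in C_c^\infty$ and using $(-\Delta)^s\varphi=O(|x|^{-n-2s})$ together with $K_s*(-\Delta)^s\varphi=\varphi$. Pointwise validity is deferred to the local regularity results, as the statement says.

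The main obstacle is the comparison step. It requires a maximum principle for $u-\eta_R$, where $\eta_R$ is only Hölder regular and the inequality holds partly pointwise and partly distributionally. I would first try to handle this by applying the comparison on $B_{R'}$, $R'\to\infty$, using the Green function of the ball, which gives $u\ge G_{B_{R'}}*v^p$ directly via the Poisson-kernel representation for $C^{1,1}_{\rm loc}\cap\mathcal L_s$ functions. Then I would let $R'\to\infty$ using monotone convergence of the Green functions to $K_s$.
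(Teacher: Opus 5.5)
Your proposal is correct and follows essentially the paper's route: the ball Green-function comparison you fall back on is exactly the device the paper uses. It is followed, as in the paper, by monotone convergence $G_R\uparrow K_s$, the Liouville theorem for nonnegative $s$-harmonic functions in $\mathcal L_s(\R^n)$, and elimination of the constants through the divergence of $\int_{\R^n}K_s$.

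Your first comparison via $K_s*(\chi_{B_R}v^p)$ would also go through if you replaced $\chi_{B_R}$ by a smooth cutoff $\phi_R$. Then $K_s*(\phi_Rv^p)$ is $C^{1,1}_{\rm loc}$, and the pointwise global minimum principle applies directly. Your Fubini verification of the converse supplies detail that the paper leaves implicit.
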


\begin{proof}
The argument is standard, but we include it for completeness. Let $G_R(x,y)$ denote the Green kernel of $(-\Delta)^s$ in $B_R$ with zero exterior data, and set
\[
 U_R(x)=\int_{B_R}G_R(x,y)v(y)^p\,\dd y,
 \qquad x\in B_R,
\]
extending $U_R$ by zero outside $B_R$.  Then
\[
 (-\Delta)^sU_R=v^p\quad\text{in }B_R,
 \qquad U_R=0\quad\text{on }B_R^c.
\]
Thus $u-U_R$ is $s$-harmonic in $B_R$ and agrees with the positive function $u$ on $B_R^c$. The maximum principle therefore gives
\[
 0\le U_R(x)\le u(x).
\]
As $R\to\infty$, domain monotonicity of the Green kernels and the explicit ball Green function (see, e.g., \cite{ChenSong1998,RosOton2016}) give
\[
 G_R(x,y)\uparrow K_s(x-y)
 \qquad (x\ne y).
\]
Monotone convergence now yields
\[
 I_{2s}(v^p)(x):=\int_{\R^n}K_s(x-y)v(y)^p\,\dd y\le u(x),
\]
so this potential is finite.  Set
\[
 h_u=u-I_{2s}(v^p)\ge0.
\]
Since $K_s$ is the fundamental solution of $(-\Delta)^s$,
\[
 (-\Delta)^s h_u=0
\]
in all of $\R^n$ in the distributional sense.  Also $0\le h_u\le u$, hence $h_u\in\mathcal L_s(\R^n)$.  The Liouville theorem for nonnegative entire $s$-harmonic functions implies that $h_u$ is constant; see, for example, \cite{ChenLiOu2006,JarohsWeth2016,ZhuoChenCuiYuan2014}.  Thus
\[
 u=c_u+I_{2s}(v^p),\qquad c_u\ge0.
\]
The same argument gives
\[
 v=c_v+I_{2s}(u^q),\qquad c_v\ge0.
\]
In fact, both constants vanish. If $c_u>0$, then $u\ge c_u$, and hence
\[
 v(x)\ge I_{2s}(u^q)(x)
 \ge c\,c_u^q\int_{|y|>R}|x-y|^{2s-n}\,\dd y=\infty,
\]
because the radial integral at infinity is a positive multiple of
\[
 \int_R^\infty r^{2s-1}\,\dd r,
\]
which diverges for every $s>0$. This contradicts the finiteness of $v$. Thus $c_u=0$; the same argument gives $c_v=0$, and \eqref{eq:pde-to-potential} follows.

Conversely, applying $(-\Delta)^s$ distributionally to $K_s*F$ gives $F$, since $(-\Delta)^sK_s=\delta_0$.  Applying this with $F=v^p$ and $F=u^q$ proves the reverse statement.
\end{proof}

The curve above is exactly the critical curve predicted by scaling. A large literature on HLS-type systems, integral equations, and fractional Lane--Emden problems provides classification and Liouville theorems under additional assumptions or in restricted exponent ranges; see, for example, \cite{ChenLiOu2005,ChenLiOu2006,ChengHuangLi2016,DAmbrosioMitidieri2014,Hang2007,Le2021,Lieb1983}. Our purpose here is to remove those auxiliary assumptions in the unweighted, equal-order subcritical problem: we impose no radial symmetry, no global boundedness, no finite-energy condition, and no decay hypothesis at infinity.

In the local case $s=1$ this is the classical Lane--Emden system, whose subcritical nonexistence theory is the Lane--Emden conjecture; see \cite{ChenLi2009,ChengHuang2019,GidasSpruck1981a,GidasSpruck1981b,Mitidieri1993,Mitidieri1996,PolacikQuittnerSouplet2007,SerrinZou1996,Souplet2009} for the unweighted problem, and \cite{BidautVeron1989,BidautVeronGiacomini2010,FazlyGhoussoub2014,HuangZou2026,KuiLiLiWei2025,Li2022,LiZhang2019,MitidieriPohozaev2001,Phan2012} for the weighted H\'enon--Lane--Emden counterpart, which contains the unweighted case; the corresponding subcritical theorem with arbitrary real weights was established recently in \cite{OpenAI2026}. Related half-space and conical Liouville theorems for the local problem appear in \cite{DupaigneFarinaPetitt2023,DupaigneSirakovSouplet2022,LiSouplet2025}. For the fractional Laplacian, the available Liouville-type results either hold in restricted exponent ranges \cite{QuaasXia2016}, restrict the geometry or the domain \cite{ChenFangYang2015,ChenHuyuan2018,DaiLiuLu2016,DaiQin2021,FallWeth2016,QuaasXia2014}, or require additional hypotheses such as stability or finite Morse index \cite{DavilaDupaigneWei2017,DuongNguyen2021,FazlyWei2017,Hajlaoui2022}.

We state the main result first for the exact Riesz-potential system. Proposition~\ref{prop:pde-potential} then gives the corresponding nonexistence theorem for positive classical solutions of the fractional Lane--Emden PDE in $C^{1,1}_{\rm loc}(\R^n)\cap\mathcal L_s(\R^n)$.

\begin{theorem}[Subcritical fractional Lane--Emden Liouville theorem]\label{thm:main}
Let $n\ge2$, $0<s<1$, and $p,q>0$. There do not exist finite continuous functions $u,v:\R^n\to(0,\infty)$ satisfying \eqref{eq:system} pointwise if
\begin{equation}\label{eq:subcritical}
 \frac1{p+1}+\frac1{q+1}>\frac{n-2s}{n}.
\end{equation}
\end{theorem}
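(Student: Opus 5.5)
We argue by contradiction, following the strategy outlined in the abstract. Suppose $u,v>0$ are finite continuous solutions of \eqref{eq:system} with $(p,q)$ satisfying \eqref{eq:subcritical}.

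\emph{Step 1: preliminary bounds.} We first derive lower bounds from the potential form. Since $u\ge c\int_{B_1}K_s(x-y)v^p\,\dd y\ge c(1+|x|)^{2s-n}$, and similarly for $v$, both functions decay at most like the Riesz kernel. Iterating these bounds gives immediate nonexistence in the Serrin-type range $\max\{p,q\}\le n/(n-2s)$ (or $pq\le1$): the iteration forces $I_{2s}(v^p)=\infty$. We may therefore assume $p,q$ lie strictly between the Serrin curve and the critical hyperbola. Finiteness of $u,v$ at one point also gives the integrability
\[
\int v^p(1+|y|)^{2s-n}\,\dd y<\infty,\qquad \int u^q(1+|y|)^{2s-n}\,\dd y<\infty .
\]
Local regularity then follows from standard Riesz-potential bootstrapping, so $u,v\in C^1$ with the representation $\nabla u=\nabla K_s*v^p$ valid where needed.

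\emph{Step 2: localized virial identity.} Fix a cutoff $\varphi_R(x)=\varphi(x/R)$. We compute
\[
\int \varphi_R\,(x\cdot\nabla u)\,u^q+\int\varphi_R\,(x\cdot\nabla v)\,v^p
\]
in two ways. First, integrate by parts directly, which produces $-\tfrac{n}{q+1}\int\varphi_R u^{q+1}-\tfrac{n}{p+1}\int\varphi_R v^{p+1}$ plus cutoff errors. Second, insert $\nabla u=\nabla K_s*v^p$ and use the kernel identity $x\cdot\nabla K_s(x-y)+y\cdot\nabla K_s(y-x)=(2s-n)K_s(x-y)$. The symmetric part then yields $-(n-2s)\int\!\!\int \varphi_R K_s(x-y)u^q(x)v^p(y)$, which is essentially $-(n-2s)\int\varphi_R u^{q+1}$ and $-(n-2s)\int\varphi_R v^{p+1}$ averaged. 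The antisymmetric remainder involves $(\varphi_R(x)-\varphi_R(y))(x-y)\cdot\nabla K_s(x-y)$, which is bounded by $|x-y|^{2s-n}$ times a Lipschitz factor. When $2s\le1$, the individual terms $\nabla K_s*v^p$ need not be locally absolutely integrable against $u^q$. So we combine the two equations, symmetrize in $(x,y)$ first, and pass to the limit only on the combined integrand through a diagonal truncation $|x-y|>\eps$. The upshot is
\[
\delta\int\varphi_R\,(u^{q+1}+v^{p+1})\le \text{(cross terms supported where }\nabla\varphi_R\ne0\text{ or }x,y\text{ separated by the cutoff)},
\]
with $\delta=\tfrac{n}{p+1}+\tfrac{n}{q+1}-(n-2s)>0$ coming from \eqref{eq:subcritical}.

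\emph{Step 3: controlling the cross terms.} This is the main obstacle. The error terms are nonlocal: they couple mass inside $B_R$ with mass in annuli and far away, and no decay is assumed. We plan to use the interval-pressure idea. Restrict the kernel to lines through pairs of points, and prove a one-dimensional interval-pair inequality comparing $\int_I\int_J|t-\tau|^{2s-n}$ against diagonal quantities. When $2s>n-1$, the line integrals diverge at infinity, so we state the inequality for nonnegative extended integrals, which may equal $+\infty$ on both sides consistently. We then average over radii $R\in[R_0,2R_0]$ (source layers) so that the boundary-layer mass is comparable to the bulk energy divided by $R_0$. Combined with H\"older and the integrability from Step 1, this should give a universal bound $E(R):=\int_{B_R}(u^{q+1}+v^{p+1})\le C$ for a good sequence of $R$, with $C$ independent of the solution.

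\emph{Step 4: scaling contradiction.} The system is invariant under $u_\lambda(x)=\lambda^{a}u(\lambda x)$ and $v_\lambda(x)=\lambda^{b}v(\lambda x)$, where $a=\tfrac{2s(p+1)}{pq-1}$ and $b=\tfrac{2s(q+1)}{pq-1}$. Under this scaling, $E$ transforms by the factor $\lambda^{\gamma}$ with $\gamma=a(q+1)-n>0$ exactly under \eqref{eq:subcritical}, which is equivalent to $\tfrac{n}{p+1}+\tfrac{n}{q+1}>n-2s$. Applying the universal bound to the rescaled solutions gives $E_u(\lambda R)\le C\lambda^{-\gamma}\to0$. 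On the other hand, the positive lower bounds from Step 1 give $E_u(R)\ge c>0$ on a fixed ball, which is the desired contradiction.
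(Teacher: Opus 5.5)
Your Steps 2 and 4 match the paper: the virial identity for $X=x\varphi$ with diagonal truncation when $2s\le1$, and the scaling exponent $\alpha+\beta+2s-n>0$. The gap is Step 3, which is the entire content of the proof and contains no mechanism that works.

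\textbf{The cross terms are not errors.} Write $a=\frac1{q+1}$, $b=\frac1{p+1}$ and $w=-x\cdot\nabla\eta$.
\begin{itemize}
\item Integrating by parts against $X=x\eta$ leaves the positive pressure $aW_f+bW_g=\int w\bigl(\frac{u^{q+1}}{q+1}+\frac{v^{p+1}}{p+1}\bigr)$.
\item Your antisymmetric remainder is, near the diagonal, $-w(x)D(x,y)$. It contributes $-(n-2s)T$ with $T=\iint wD\,\dd\pi$.
\item Both quantities are of the size of the energy in the transition annulus. Averaging the cutoff radius over $[R_0,2R_0]$ bounds them only by a multiple of the energy on the doubled ball, never by $\eps E+C$. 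Adding H\"older gives the classical Rellich--Pohozaev route, which closes only in low dimensions.
\end{itemize}
What must be proved is the cancellation $aW_f+bW_g\le(n-2s)T+\eps E+C_\eps$ (Proposition~\ref{prop:pressure}). Here is how the paper does it.
\begin{itemize}
\item The source layers are not radial shells. They are the superlevel sets $\{u^q>t\}$ and $\{v^p>t'\}$, sliced along lines in directions $e$ within $\kappa d(x)$ of $\omega_x$, with weights $h_e$ satisfying $\E_eh_e=w$.
\item For components $I,J$ on two \emph{distinct} parallel lines, Lemma~\ref{lem:interval} trades interior interaction for endpoint interaction plus exactly $(n-2s)\min(c_I,c_J)\mathcal D\mathcal K$. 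It uses integration by parts from the midpoints with $\partial_s\mathcal K=-(n-2s)\frac{h}{\rho^2+h^2}\mathcal K$, and the fact that the overlap density $|I\cap(J+h)|$ is symmetric and decreasing about $h=d_0$.
\item Summing the endpoint terms over all $g$-layers rebuilds $K_s*g=u$ at the endpoints, where $u=t^{1/q}$ exactly.
\item The identity $\int_0^{u^q}(u-t^{1/q})\,\dd t=\frac{u^{q+1}}{q+1}$ then returns precisely the pressure coefficient $a$.
\end{itemize}
A comparison for the single-line kernel $|t-\tau|^{2s-n}$ against ``diagonal quantities'' produces neither the factor $\mathcal D$ nor the level-set endpoint values, so it cannot substitute. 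You would also need two further ingredients:
\begin{itemize}
\item the layer-loss estimate of Lemma~\ref{lem:layer-loss}: short components lose $C_\kappa\delta$, long components at high levels occur with small direction-probability, and low levels cost only $CP$;
\item the localization Lemma~\ref{lem:localization}, including $|E_f-E_g|\le\eps E+C_\eps$. This is indispensable because $c_1=an-(n-2s)$ may be negative.
\end{itemize}

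\textbf{The constant is not shown to be solution-independent.} Nothing in your Step 1 gives this, and Step 4 depends on it. Your integrability bound is controlled by $u(0)$, while $u_\lambda(0)=\lambda^{\alpha}u(0)\to\infty$, where $\alpha=\frac{2s(p+1)}{pq-1}$ is the exponent you call $a$. The paper obtains universality as follows.
\begin{itemize}
\item Set $M_f=\int_{B_1}u^q$ and $M_g=\int_{B_1}v^p$. Then $\inf_{B_1}u\ge cM_g$ and $\inf_{B_1}v\ge cM_f$, which give $M_f\ge cM_f^{pq}$.
\item Since $pq>1$, this forces $M_f+M_g\le C$. By scaling, $\int_{B_R}u^q\le CR^{n-2s-\beta}$.
\item This yields the universal $L^1(B_4)$ bound and the tail bound $\int_{|x-y|\ge\ell}K_s(x-y)(f+g)(y)\,\dd y\le C\ell^{2s-n}$.
\item Together with a cutoff vanishing to high order at $|x|=2$, these turn all far-pair and low-level errors into universal constants.
\end{itemize}
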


\begin{corollary}[Fractional PDE form]\label{cor:pde}
Let $n\ge2$, $0<s<1$, and $p,q>0$ satisfy \eqref{eq:subcritical}.  Then the fractional Lane--Emden system
\[
 (-\Delta)^s u=v^p,\qquad (-\Delta)^s v=u^q
 \qquad\text{in }\R^n
\]
has no strictly positive solution $(u,v)$ with
\[
 u,v\in C^{1,1}_{\rm loc}(\R^n)\cap\mathcal L_s(\R^n).
\]
\end{corollary}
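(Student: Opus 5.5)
The statement immediately above is Corollary~\ref{cor:pde}. It follows by combining Proposition~\ref{prop:pde-potential} with Theorem~\ref{thm:main}, so the plan is a short reduction argument.

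Suppose, for contradiction, that $(u,v)$ is a strictly positive solution of the fractional system with $u,v\in C^{1,1}_{\rm loc}(\R^n)\cap\mathcal L_s(\R^n)$, and that $p,q>0$ satisfy \eqref{eq:subcritical}. The first step is to pass to the potential formulation. Since $u,v$ lie in $C^{1,1}_{\rm loc}$, the pointwise principal value defining $(-\Delta)^s u$ and $(-\Delta)^s v$ converges. Membership in $\mathcal L_s$ controls the tail of that integral. These are exactly the hypotheses of Proposition~\ref{prop:pde-potential}, which then gives $u=K_s*v^p$ and $v=K_s*u^q$ at every point of $\R^n$. Part of that argument is the observation that each potential is bounded above by the finite value $u(x)$ or $v(x)$. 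So $u$ and $v$ satisfy \eqref{eq:system} pointwise, with finite values everywhere.

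The second step is to check the regularity hypotheses of Theorem~\ref{thm:main}. Every $C^{1,1}_{\rm loc}$ function is continuous, so $u$ and $v$ are finite continuous maps $\R^n\to(0,\infty)$. Strict positivity is assumed in the corollary. Since $n\ge2$, $0<s<1$, $p,q>0$ and \eqref{eq:subcritical} holds, Theorem~\ref{thm:main} says no such pair can exist, which is the required contradiction.

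There is no real obstacle here. The only point needing care is that Proposition~\ref{prop:pde-potential} uses the pointwise definition of the fractional Laplacian on the class $C^{1,1}_{\rm loc}\cap\mathcal L_s$. That is precisely the class named in the corollary, so no extra regularity or decay has to be verified.
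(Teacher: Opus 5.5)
Your proposal is correct and matches the paper's proof: Proposition~\ref{prop:pde-potential} turns a strictly positive $C^{1,1}_{\rm loc}\cap\mathcal L_s$ solution into a finite, continuous, strictly positive pointwise solution of \eqref{eq:system}, and Theorem~\ref{thm:main} then excludes it. Your explicit check that continuity and finiteness carry over simply spells out what the paper's one-line proof leaves implicit.
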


\begin{proof}
Proposition~\ref{prop:pde-potential} reduces the PDE system to \eqref{eq:system}, and Theorem~\ref{thm:main} applies.
\end{proof}

\paragraph{Idea of the proof.}
The main point is to treat the two equations as one coupled interaction rather than estimate $u$ and $v$ separately. We encode that interaction in a positive measure on pairs of points $(x,y)$. A localized virial identity then describes how the interaction responds to a radial dilation. Its troublesome part is a directional ``pressure'' term supported near the edge of the cutoff. To estimate it, we slice the superlevel sets of $u^q$ and $v^p$ along directions that are almost radial. On each pair of one-dimensional source intervals, a simple integration-by-parts inequality converts interior interaction into endpoint interaction plus a nonnegative directional term. Because the endpoints lie on fixed source levels, their values are explicit. After averaging over directions, the remaining errors are either a small multiple of the local energy or a universal additive constant. This produces a uniform local energy bound, while subcritical scaling forces that same energy to grow without bound.

The proof builds on the localized potential-kernel virial and source-layer interval-pressure method introduced in the recent proof of the subcritical H\'enon--Lane--Emden conjecture by OpenAI \cite{OpenAI2026}. The fractional extension has its own difficulties. Two nonlocal features need to be handled separately. First, when $2s\le1$, the individual kernel-gradient terms fail to be locally absolutely integrable. We therefore cut out the diagonal, combine the two equations, and only then let the truncation vanish; the factor $X(x)-X(y)$ restores exactly one power of $|x-y|$. Second, when $2s>n-1$, the restriction of $K_s$ to a line is not integrable at infinity. For that reason the interval-pair comparison is kept in a nonnegative extended-integral form until every term that will later be subtracted is known to be finite.

Here is the structure of the argument. Set
\[
 f=u^q,\qquad g=v^p,
\]
and introduce the positive interaction measure
\[
 \dd\pi(x,y)=f(x)g(y)K_s(x-y)\,\dd x\,\dd y.
\]
Its marginals are $fu\,\dd x$ and $gv\,\dd y$. A compactly supported vector field $X=x\eta$ yields a localized fractional Riesz virial identity with coefficient $n-2s$, in the spirit of the Pohozaev-type identities for the fractional Laplacian \cite{FallWeth2012,RosOtonSerra2014a,RosOtonSerra2015}. A high-order radial cutoff localizes the virial kernel to
\[
 \eta(x)-w(x)D(x,y),
 \qquad
 w=-x\cdot\nabla\eta,
 \qquad
 D(x,y)=\frac{((x-y)\cdot\omega_x)^2}{|x-y|^2}.
\]
The term involving $wD$ is the pressure. To control it, we average over directions $e$ close to $\omega_x$, decompose the superlevel sets of $f$ and $g$ into one-dimensional intervals, and apply a weighted interval-pair inequality to the restricted fractional Riesz kernel. The endpoint terms recover the original Riesz potentials, while the exact layer identity
\[
 \int_0^{u(x)^q}\bigl(u(x)-t^{1/q}\bigr)\,\dd t
 =\frac{u(x)^{q+1}}{q+1}
\]
recovers the nonlinear energy. Short intervals see almost no variation of the cap weights, long intervals at high source levels occupy only a small set of directions, and low levels contribute only a universal additive error. These three observations give the desired local energy bound. The strict subcritical inequality is exactly what makes the corresponding scaling exponent positive, and dilation then produces the contradiction.

\paragraph{Relation with the Newtonian interval-pressure method.}
The geometric reason for the argument is the logarithmic derivative identity
\begin{equation}\label{eq:logder-intro}
 \nabla K_s(z)=-(n-2s)\frac{z}{|z|^2}K_s(z).
\end{equation}
The Newton kernel is simply the Riesz kernel of order $2$. In the present problem that order becomes $2s\in(0,2)$. The calculations below show that the interval-pressure mechanism is driven by the homogeneity and positivity of the kernel, not by locality of the Laplacian itself.

\paragraph{Organization.}
Section \ref{sec:prelim} gives scaling, source-mass estimates, truncated potentials, and local regularity. Section \ref{sec:virial} proves the fractional Riesz virial identity and the localization lemma. Section \ref{sec:interval} establishes the weighted interval-pair inequality. Section \ref{sec:pressure} proves the localized source-layer pressure estimate. Section \ref{sec:closure} derives the universal local energy bound and the scaling contradiction.

\section{Scaling and universal potential estimates}\label{sec:prelim}

Throughout the paper, $C$ denotes a positive constant that may depend on $n,s,p,q$ and on any auxiliary parameters displayed explicitly, but never on the particular solution under consideration. Set
\begin{equation}\label{eq:f-g-def}
 f=u^q,\qquad g=v^p.
\end{equation}
Then \eqref{eq:system} reads
\begin{equation}\label{eq:potential}
 u=K_s*g,\qquad v=K_s*f.
\end{equation}

\subsection{The small-product exclusion and scaling}

\begin{lemma}\label{lem:pq}
Every positive finite continuous solution of \eqref{eq:system} satisfies $pq>1$.
\end{lemma}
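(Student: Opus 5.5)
Assume $pq\le1$; we will bootstrap lower bounds at infinity until a potential diverges. Fix a point, say $0$, and set $c_0=\min_{\overline{B_1}}u>0$ and $d_0=\min_{\overline{B_1}}v>0$; both are positive by continuity. For $|x|\ge 1$ and $y\in B_1$ we have $|x-y|\le 2|x|$, so
\[
 u(x)\ge \int_{B_1}K_s(x-y)g(y)\dd y\ge c|x|^{2s-n},\qquad
 v(x)\ge c|x|^{2s-n}\qquad(|x|\ge1).
\]
This is the seed of the iteration.

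The induction step is as follows. Suppose $u(x)\ge a|x|^{-\alpha}$ for $|x|\ge1$, with $\alpha\ge0$. Then $f=u^q\ge a^q|x|^{-q\alpha}$. If $q\alpha\le 2s$, then $\int_{|y|\ge1}|y|^{2s-n-q\alpha}\dd y=\infty$. Since $K_s(x-y)\ge c|y|^{2s-n}$ for $|y|\ge2|x|$, this gives $v(x)=\infty$, which contradicts finiteness. Otherwise $q\alpha>2s$, and integrating over the annulus $|x|\le|y|\le2|x|$ (where $|x-y|\le3|x|$) gives
\[
 v(x)\ge c\,|x|^{2s-n}\,|x|^{n-q\alpha}=c|x|^{-(q\alpha-2s)}.
\]
The same step applies with the roles of $u$ and $v$ swapped. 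Composing the two steps maps the exponent $\alpha$ of $u$ to $\alpha'=p(q\alpha-2s)-2s=pq\,\alpha-2s(p+1)$.

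When $pq\le1$ this affine map is strictly decreasing along iterates. We have $\alpha'-\alpha=(pq-1)\alpha-2s(p+1)\le-2s(p+1)<0$ for $\alpha\ge0$. So after finitely many steps, starting from $\alpha_0=n-2s$, we reach either an intermediate exponent with $q\alpha\le2s$ (respectively $p\beta\le 2s$), which forces divergence, or a negative exponent. A negative exponent means $u$ grows, and then $f$ is bounded below by a growing power, so the divergence criterion is met at the next step. In every case some potential is infinite at a point, contradicting finiteness, and hence $pq>1$. The constants degrade at each step, but only finitely many steps are used, so this causes no problem.

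The main point to check is that the divergence criterion triggers cleanly. This needs care in the borderline equality $q\alpha=2s$: the integral $\int^\infty r^{2s-1-q\alpha}\dd r$ diverges logarithmically there, which is enough. It also needs the lower bound $K_s(x-y)\ge c|y|^{2s-n}$ on the region $|y|\ge 2|x|$ to be uniform. Both are elementary, so I expect no serious obstacle; the argument is the standard Serrin--Zou/Mitidieri-type iteration adapted to the Riesz kernel.
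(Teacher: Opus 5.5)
Your argument is correct, but it follows a different route from the paper.

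\textbf{The paper's argument.} It is a one-shot scaling comparison on balls. With $m_u(R)=\inf_{B_R}u$ and $m_v(R)=\inf_{B_R}v$, the only input is the uniform bound $K_s(x-y)\ge cR^{2s-n}$ on $B_R\times B_R$. This gives $m_u(R)\ge cR^{2s}m_v(R)^p$ and $m_v(R)\ge cR^{2s}m_u(R)^q$, hence $m_u(R)^{1-pq}\ge cR^{2s(1+p)}$. When $pq\le 1$ the left side stays bounded, by $u(0)^{1-pq}$ or by $1$, while the right side blows up.

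\textbf{Your argument.} You run the Serrin--Zou/Mitidieri decay iteration instead. From the seed $u,v\gtrsim|x|^{2s-n}$ on $|x|\ge1$, each half-step does one of two things:
\begin{itemize}
\item it makes a potential infinite when $q\alpha\le 2s$ (resp.\ $p\beta\le2s$), using $K_s(x-y)\ge c|y|^{2s-n}$ on $|y|\ge2|x|$, which also covers the logarithmic borderline;
\item otherwise it improves the lower bound via the annulus $|x|\le|y|\le2|x|$.
\end{itemize}
The composed exponent map $\alpha\mapsto pq\,\alpha-2s(p+1)$ then drops by at least $2s(p+1)$ per round when $pq\le1$.

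One simplification: the ``negative exponent'' branch never occurs. Every non-divergent half-step produces a strictly positive exponent, first $q\alpha-2s>0$ and then $p\beta-2s>0$. So the uniform decrease alone forces a divergence step after finitely many rounds, and you can drop that branch.

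\textbf{Comparison.} The paper's version is shorter. It needs no case analysis or threshold bookkeeping and uses only the behavior of $u,v$ on a single ball. Yours costs more bookkeeping, but it yields explicit pointwise lower decay bounds along the way and exhibits exactly which potential diverges. Pushed to $pq>1$, both mechanisms give the same Serrin-type obstruction $2s(p+1)/(pq-1)\le n-2s$. For the iteration this follows from $\alpha'-\alpha^*=pq(\alpha-\alpha^*)$. For the infimum inequality it follows by combining $m_u(R)\lesssim R^{-\alpha^*}$ with $m_u(R)\gtrsim R^{2s-n}$. So neither route is stronger for this lemma.
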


\begin{proof}
For $R>0$, set
\[
 m_u(R)=\inf_{B_R}u,\qquad m_v(R)=\inf_{B_R}v.
\]
Both quantities are positive. If $x,y\in B_R$, then $|x-y|\le2R$ and hence $K_s(x-y)\ge cR^{2s-n}$. It follows that
\[
 m_u(R)\ge cR^{2s} m_v(R)^p,
 \qquad
 m_v(R)\ge cR^{2s} m_u(R)^q.
\]
Combining the inequalities gives
\[
 m_u(R)^{1-pq}\ge cR^{2s(1+p)}.
\]
If $pq<1$, the left-hand side is at most $u(0)^{1-pq}$, while the right-hand side tends to infinity. If $pq=1$, the left-hand side is one and the same contradiction follows.
\end{proof}

From now on we may assume $pq>1$. Define
\begin{equation}\label{eq:alpha-beta}
 \alpha=\frac{2s(p+1)}{pq-1},
 \qquad
 \beta=\frac{2s(q+1)}{pq-1}.
\end{equation}
Then
\begin{equation}\label{eq:scaling-rel}
 \alpha+2s=p\beta,
 \qquad
 \beta+2s=q\alpha.
\end{equation}
For $R>0$, set
\begin{equation}\label{eq:scaling}
 u_R(x)=R^\alpha u(Rx),
 \qquad
 v_R(x)=R^\beta v(Rx).
\end{equation}
A change of variables shows that $(u_R,v_R)$ again solves \eqref{eq:system} with the same kernel normalization.

\subsection{Source masses}

\begin{proposition}[Universal centered source estimates]\label{prop:source}
There exists $C$ such that every solution satisfies, for all $R>0$,
\begin{equation}\label{eq:source-bounds}
 \int_{B_R}f\le CR^{n-2s-\beta},
 \qquad
 \int_{B_R}g\le CR^{n-2s-\alpha}.
\end{equation}
Moreover,
\begin{equation}\label{eq:L1-local}
 \int_{B_4}(f+g+u+v)\le C,
\end{equation}
and, for $x\in B_2$ and $0<\ell\le1$,
\begin{equation}\label{eq:tail}
 \int_{|x-y|\ge\ell}K_s(x-y)(f(y)+g(y))\,\dd y
 \le C\ell^{2s-n}.
\end{equation}
\end{proposition}

\begin{proof}
Let
\[
 M_f=\int_{B_1}f,\qquad M_g=\int_{B_1}g.
\]
For $x\in B_1$, the kernel has a uniform positive lower bound on $B_1\times B_1$, hence
\[
 u(x)\ge cM_g,\qquad v(x)\ge cM_f.
\]
It follows that
\[
 M_f\ge cM_g^q,\qquad M_g\ge cM_f^p,
\]
and therefore $M_f\ge cM_f^{pq}$. By \cref{lem:pq}, $M_f+M_g\le C$. Applying this unit-ball estimate to the scaled pair \eqref{eq:scaling} and changing variables yields \eqref{eq:source-bounds}.

For $x\in B_4$, decompose the contribution to the potential from $\R^n\setminus B_8$ into annuli $2^j\le|y|<2^{j+1}$, $j\ge3$. By \eqref{eq:source-bounds}, the $g$-contribution is bounded by
\[
 C\sum_{j\ge3}2^{j(2s-n)}\int_{B_{2^{j+1}}}g
 \le C\sum_{j\ge3}2^{-j\alpha}<\infty,
\]
and similarly for $f$. On the near region, Tonelli's theorem and
\[
 \sup_{y\in B_8}\int_{B_4}K_s(x-y)\,\dd x<\infty
\]
combine with the source-mass bounds to give \eqref{eq:L1-local}.

For the tail estimate, observe that $K_s(x-y)\le C\ell^{2s-n}$ whenever $|x-y|\ge\ell$. The part over $B_8$ is therefore bounded by $C\ell^{2s-n}$, while the annular estimate above bounds the contribution from $\R^n\setminus B_8$ by $C$. Since $\ell^{2s-n}\ge1$ for $0<\ell\le1$, this proves \eqref{eq:tail}.
\end{proof}

\subsection{Local regularity}

We shall need only a standard local smoothing property of Riesz potentials. For background one may consult Adams--Hedberg \cite{AdamsHedberg1996}; related regularity lifting arguments for fractional Lane--Emden systems appear in \cite{ChenJinLiLim2005}; potential-theoretic representation formulae for higher-order and fractional problems may be found in \cite{CaristiDAmbrosioMitidieri2008}, and we refer to \cite{CaffarelliSilvestre2007,RosOtonSerra2014b,Silvestre2007} for the regularity theory of the fractional Dirichlet problem.

\begin{lemma}[Local smoothing]\label{lem:regularity}
Let $(u,v)$ be a finite continuous positive solution of \eqref{eq:system}. Then $u,v,f,g$ are $C^1$ on every compact subset of $\R^n$. More precisely, if $2s\le1$, then after finitely many local bootstrap steps one may arrange
\[
 f,g\in C^{0,\theta}_{\mathrm{loc}}(\R^n)
 \quad\text{for some }\theta>1-2s.
\]
\end{lemma}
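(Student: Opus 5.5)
The plan is a local bootstrap. Fix $R\ge1$ and a cutoff $\chi\in C_c^\infty(B_{4R})$ with $\chi\equiv1$ on $B_{2R}$. Split
\[
 u=K_s*(\chi g)+K_s*((1-\chi)g)=:u_1+u_2 ,
\]
and split $v$ the same way with $f$. The far part $u_2$ is $C^\infty$ on $B_R$. Indeed, $K_s(x-y)$ is smooth in $x\in B_R$ for $|y|\ge2R$, and every derivative satisfies $|\partial^k_xK_s(x-y)|\le C_k|y|^{2s-n-k}\le C K_s(x_0-y)$ up to constants. Because $u(0)<\infty$, the integral $\int (1-\chi)g\,|y|^{2s-n}\,\dd y$ is finite, so we may differentiate under the integral sign by dominated convergence. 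The same reasoning applies to $v_2$. Everything is therefore reduced to the near parts, which are Riesz potentials of compactly supported densities.

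The starting regularity comes from the hypotheses. The functions $u,v$ are continuous and positive, so $f=u^q$ and $g=v^p$ are continuous, hence bounded on $B_{4R}$. This gives $\chi g,\chi f\in L^\infty$ with compact support. The Riesz potential $I_{2s}$ maps $L^\infty_c$ into $C^{0,\gamma}$ for every $\gamma<\min(2s,1)$, and into $C^{1,\gamma}$ when $2s>1$, with $\gamma<2s-1$. This is the standard kernel-difference estimate: bound $\int|K_s(x-y)-K_s(x'-y)|\,\dd y$ by splitting at $|x-y|\sim2|x-x'|$. More generally, $I_{2s}$ maps compactly supported $C^{0,\theta}$ densities into $C^{0,\theta+2s}$ (or $C^{1,\theta+2s-1}$ once $\theta+2s>1$), provided $\theta+2s\notin\mathbb Z$. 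This is the Hölder version of the Schauder estimate for Riesz potentials, as in Adams--Hedberg or Stein. The key point is that cancellation of $\nabla K_s$ against $\chi g(y)-\chi g(x)$ controls the singular part. It also uses $\int_{B_\rho}\nabla K_s=0$ over spheres in the principal-value sense.

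The bootstrap then runs as follows. Since $u,v>0$ are continuous, on compact sets they are bounded below by a positive constant. So $t\mapsto t^q$ and $t\mapsto t^p$ are smooth on the relevant range, and $f,g$ inherit the Hölder or $C^1$ regularity of $u,v$ regardless of whether $p,q<1$. Each cycle takes $g\in C^{0,\theta}$ to $u\in C^{0,\theta+2s}$ to $f\in C^{0,\theta+2s}$, gaining $2s$ per step on a slightly smaller ball. Choose the exponents to avoid integers; starting from $\theta_0<2s$ slightly perturbed, this is always possible. After finitely many steps, roughly $\lceil 1/(2s)\rceil$, the exponent exceeds $1-2s$, which is the explicit claim for $2s\le1$. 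One more step pushes $\theta+2s$ past $1$, so $u,v\in C^{1,\gamma}$ and then $f,g\in C^1$ locally. Since $R$ is arbitrary, the statement holds on every compact set.

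The main obstacle is the Hölder-to-Hölder mapping for the near part at the borderline. There one needs the derivative representation $\nabla I_{2s}(h)(x)=\int\nabla K_s(x-y)\bigl(h(y)-h(x)\bigr)\,\dd y$ plus a boundary term, and this must be justified when $\nabla K_s\sim|z|^{2s-n-1}$ is not locally integrable ($2s\le1$). The Hölder continuity of $h$ with exponent $\theta>1-2s$ is exactly what makes $|z|^{2s-n-1+\theta}$ integrable. I would prove this representation directly, by truncating at $|x-y|>\varepsilon$, differentiating, and letting $\varepsilon\to0$. The interplay with the cutoff $\chi$ is harmless, because $\chi\equiv1$ near the points considered.
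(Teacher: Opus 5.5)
Your proposal is correct and follows essentially the same route as the paper: a cutoff splitting $u=K_s*(\chi g)+K_s*((1-\chi)g)$, smoothness of the far part by dominating kernel derivatives with a multiple of $K_s(x_0-y)$ (integrable against $g$ since $u(x_0)<\infty$), a H\"older gain of order just below $2s$ per step for the compactly supported near part, and positivity of $u,v$ to pass each gain to $f=u^q$, $g=v^p$. The differences are cosmetic: you avoid integer thresholds by perturbing exponents, where the paper uses the Zygmund interpretation, and you sketch the truncated-gradient justification for $2s\le1$, which the paper carries out inside the proof of the virial lemma.
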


\begin{proof}
Fix $K\Subset K_1$ and choose $\chi\in C_c^\infty(K_1)$ equal to one near $K$. Decompose
\[
 u=K_s*(\chi g)+K_s*((1-\chi)g).
\]
The second term is smooth on $K$: derivatives of the kernel are uniformly dominated there by a fixed multiple of $K_s(x_0-y)$ for some $x_0\in K$, which is integrable against $g$ by \eqref{eq:potential}. For the compactly supported near part, the standard local fractional Riesz-potential estimate gives a gain of any H\"older order strictly smaller than $2s$ (with the usual Zygmund interpretation at integer thresholds). Since $u,v$ are strictly positive on $K_1$, the maps $t\mapsto t^p$ and $t\mapsto t^q$ are smooth on the ranges of $u,v$ there. Thus each regularity gain for $u,v$ is inherited by $f,g$ and can be iterated. A finite number of iterations gives $C^1$ regularity, and, when $2s\le1$, further iteration gives a H\"older exponent $\theta>1-2s$ for $f,g$.
\end{proof}

\section{A localized fractional Riesz virial identity}\label{sec:virial}

\subsection{The interaction measure and the diagonal cancellation}

Define the nonnegative interaction measure
\begin{equation}\label{eq:pi}
 \dd\pi(x,y)=f(x)g(y)K_s(x-y)\,\dd x\,\dd y.
\end{equation}
By Tonelli and \eqref{eq:potential},
\begin{equation}\label{eq:marginals}
 \pi(\dd x,\R^n)=f(x)u(x)\,\dd x,
 \qquad
 \pi(\R^n,\dd y)=g(y)v(y)\,\dd y.
\end{equation}

\begin{lemma}[Localized fractional Riesz virial identity]\label{lem:virial-general}
Let $X\in C_c^1(\R^n;\R^n)$ and set
\begin{equation}\label{eq:PhiX}
 \Phi_X(x,y)=\frac{(X(x)-X(y))\cdot(x-y)}{|x-y|^2}.
\end{equation}
Then $\Phi_X\in L^1(\pi)$ and
\begin{equation}\label{eq:virial-general}
 -\int fX\cdot\nabla u
 -\int gX\cdot\nabla v
 =(n-2s)\iint\Phi_X(x,y)\,\dd\pi(x,y).
\end{equation}
\end{lemma}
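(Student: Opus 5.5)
We argue by differentiating the potentials under a diagonal truncation and symmetrizing. The integrability claim is quick. Since $X\in C^1_c$, the difference quotient $|X(x)-X(y)|/|x-y|$ is bounded by $\|\nabla X\|_\infty$, so $|\Phi_X|\le\|\nabla X\|_\infty$. Moreover $\Phi_X$ vanishes unless $x$ or $y$ lies in $\operatorname{supp}X\subset B_\rho$. Hence
\[
\iint|\Phi_X|\,\dd\pi\le C\Bigl(\int_{B_\rho}fu+\int_{B_\rho}gv\Bigr),
\]
where we used the marginals \eqref{eq:marginals}. This is finite because $u,v,f,g$ are continuous by \cref{lem:regularity}.

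For the identity, fix $\eps>0$ and set $u_\eps(x)=\int_{|x-y|>\eps}K_s(x-y)g(y)\,\dd y$, and similarly $v_\eps$. Using the logarithmic derivative \eqref{eq:logder-intro}, we write
\[
\nabla u_\eps(x)=-(n-2s)\int_{|x-y|>\eps}\frac{x-y}{|x-y|^2}K_s(x-y)g(y)\,\dd y+B_\eps(x),
\]
where $B_\eps(x)=-\int_{|z|=\eps}K_s(z)\,g(x-z)\,\nu\,\dd\sigma$ is a boundary sphere term. Differentiation under the integral is justified by the tail estimate \eqref{eq:tail} together with the local argument of \cref{lem:regularity}. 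We then integrate against $fX$, and do the same for $v_\eps$ against $gX$, with the roles of $x$ and $y$ swapped. In the second term we rename variables so that both are integrals over $\{|x-y|>\eps\}$ against $\dd\pi$. Adding the two bulk terms gives exactly
\[
(n-2s)\iint_{|x-y|>\eps}\Phi_X\,\dd\pi.
\]
This is the sense in which the two equations must be combined before the limit is taken. By $\Phi_X\in L^1(\pi)$ and dominated convergence, this converges as $\eps\to0$ to the right-hand side of \eqref{eq:virial-general}.

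The main obstacle is the passage $\eps\to0$ on the left-hand side, especially when $2s\le1$. In that range $\nabla K_s$ is not locally integrable, so $\nabla u$ is not given by the absolutely convergent formula. There are two things to handle.

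First, the sphere terms. We combine $\int fX\cdot B_\eps$ with the analogous $v$-term. By symmetry of $\nu$, $B_\eps(x)=-\eps^{2s-n}c\int_{|z|=\eps}(g(x-z)-g(x))\nu\,\dd\sigma$. With $g\in C^{0,\theta}$, $\theta>1-2s$, from \cref{lem:regularity}, this is $O(\eps^{2s-1+\theta})\to0$. When $2s>1$, $C^1$ regularity gives $O(\eps^{2s})$.

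Second, we need $\nabla u_\eps\to\nabla u$ locally uniformly, or at least weakly against $fX$. We would write $u-u_\eps=\int_{|z|<\eps}K_s(z)g(x-z)\,\dd z$. Its gradient is $\int_{|z|<\eps}K_s(z)\nabla g(x-z)\,\dd z$ when $g\in C^1$, which holds by \cref{lem:regularity}, and this is $O(\eps^{2s})\sup|\nabla g|$. This makes the limit straightforward on compact sets.

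If the $C^1$ route to $\nabla(u-u_\eps)$ is delicate, the fallback is to avoid differentiating altogether. We would write $-\int fX\cdot\nabla u=\int u\,\operatorname{div}(fX)$, and then insert the representation of $u$ and of $v$ directly. The combined symmetrized integrand then carries the factor $X(x)-X(y)$, which supplies the missing power of $|x-y|$, so each step is absolutely convergent.
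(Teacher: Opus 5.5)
Your proposal is correct and follows essentially the paper's route: truncate at $|x-y|>\eps$, use the logarithmic derivative of $K_s$ and Fubini to symmetrize both equations into $(n-2s)\iint_{|x-y|>\eps}\Phi_X\,\dd\pi$, then pass to the limit on the right using $\Phi_X\in L^1(\pi)$ and on the left using locally uniform convergence of the gradients. The only difference is bookkeeping: you truncate the potential rather than the gradient kernel, so you carry the sphere term $B_\eps$ (removed by the same cancellation $\int_{|z|=\eps}\nu\,\dd\sigma=0$ that the paper uses through the oddness of $\nabla K_s$) and an inner-ball term controlled by $g\in C^1_{\rm loc}$, which has the minor advantage of explicitly identifying the limit of the truncated gradients with $\nabla u$; your fallback paragraph is not needed.
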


\begin{proof}
Because $X$ is Lipschitz,
\[
 |\Phi_X(x,y)|\le \|\nabla X\|_\infty
\]
when $|x-y|\le1$. Hence the diagonal singularity in $\Phi_X\,\dd\pi$ is no worse than $|x-y|^{2s-n}$, which is locally integrable for every $s>0$. Away from the diagonal, at least one endpoint lies in a fixed compact set whenever $\Phi_X\ne0$, and the marginal identities \eqref{eq:marginals} give absolute integrability.

For $\eps>0$, define
\[
 G_\eps^g(x)=\int_{|x-y|>\eps}\nabla K_s(x-y)g(y)\,\dd y,
\]
and similarly $G_\eps^f$. If $2s>1$, the gradient kernel is locally integrable and $G_\eps^g\to\nabla u$ locally uniformly. If $0<2s\le1$, use \cref{lem:regularity} with $g\in C^{0,\theta}$ and $\theta>1-2s$. Since $\nabla K_s$ is odd,
\[
 \int_{\eps<|z|<r}\nabla K_s(z)\,\dd z=0,
\]
so, uniformly on a fixed compact set,
\begin{align*}
 |G_{\eps'}^g(x)-G_\eps^g(x)|
 &\le C\int_{\eps'}^\eps r^{2s-n-1}r^\theta r^{n-1}\,\dd r\\
 &\le C\eps^{2s+\theta-1}.
\end{align*}
Thus $G_\eps^g\to\nabla u$ locally uniformly; the same holds for $f,v$.

Set
\[
 I_\eps=-\int fX\cdot G_\eps^g-\int gX\cdot G_\eps^f.
\]
For fixed $\eps$, Fubini is legitimate. Using
\begin{equation}\label{eq:grad-kernel}
 \nabla K_s(z)=-(n-2s)\frac{z}{|z|^2}K_s(z)
\end{equation}
and exchanging variables in the second term gives
\[
 I_\eps=(n-2s)
 \iint_{|x-y|>\eps}\Phi_X(x,y)\,\dd\pi(x,y).
\]
The left side converges to the left side of \eqref{eq:virial-general} by the local uniform convergence of the truncated gradients. The right side converges by the absolute integrability already established.
\end{proof}

\subsection{The cutoff and localized identity}

Set $t_+=\max\{t,0\}$ and define
\begin{equation}\label{eq:cutoff-parameters}
 d(x)=\min\{1,(2-|x|)_+\},
 \qquad
 m=n+3,
 \qquad
 k=n(m+1),
 \qquad
 N=k+n+2.
\end{equation}
We take the radial cutoff
\begin{equation}\label{eq:eta}
 \eta(x)=\bigl[(1-(|x|-1)_+^3)_+\bigr]^N,
 \qquad
 w(x)=-x\cdot\nabla\eta(x),
 \qquad
 X(x)=x\eta(x).
\end{equation}
Then $\eta\in C_c^2(\R^n)$, $\eta=1$ on $B_1$, $w\ge0$, and
\begin{equation}\label{eq:cutoff-estimates}
 d^N\le\eta\le Cd^N,
 \quad
 |\nabla\eta|+w\le Cd^{N-1},
 \quad
 |\nabla^2\eta|+|\nabla w|+|\nabla^2X|\le Cd^{N-2},
 \quad
 dw\le C\eta.
\end{equation}
The high order of vanishing at $|x|=2$ is deliberate; it is the same device used in \cite{OpenAI2026} to make all far-pair errors integrable with room to spare.

Set
\begin{equation}\label{eq:energies}
 E_f=\int\eta fu,
 \quad
 E_g=\int\eta gv,
 \quad
 E=E_f+E_g,
 \quad
 W_f=\int wfu,
 \quad
 W_g=\int wgv,
\end{equation}
and
\begin{equation}\label{eq:a-b}
 a=\frac1{q+1},\qquad b=\frac1{p+1}.
\end{equation}
For $x\ne0$, also set
\begin{equation}\label{eq:D}
 \omega_x=\frac{x}{|x|},
 \qquad
 D(x,y)=\frac{((x-y)\cdot\omega_x)^2}{|x-y|^2}.
\end{equation}
The value of $\omega_0$ is irrelevant.

\begin{proposition}[Localized virial identity]\label{prop:localized-virial}
With
\[
 \Phi(x,y)=\frac{(X(x)-X(y))\cdot(x-y)}{|x-y|^2},
\]
one has
\begin{equation}\label{eq:localized-virial}
 anE_f+bnE_g-aW_f-bW_g
 =(n-2s)\iint\Phi(x,y)\,\dd\pi(x,y).
\end{equation}
\end{proposition}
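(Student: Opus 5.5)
The plan is to apply \cref{lem:virial-general} with $X=x\eta$, which is $C^1$ with compact support because $\eta\in C_c^2$. The right-hand side of \eqref{eq:virial-general} is then exactly the right-hand side of \eqref{eq:localized-virial}. What remains is to rewrite the left-hand side
\[
 -\int f\,X\cdot\nabla u-\int g\,X\cdot\nabla v
\]
in terms of $E_f,E_g,W_f,W_g$. A direct integration by parts on each term separately does not work, because $f\nabla u$ is not an exact gradient. Instead I will use the variational structure of the system together with the virial identity itself.

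First, for an arbitrary $X$ the Riesz identity gives a symmetric relation. Exchanging the roles of $(f,u)$ and $(g,v)$ leaves $\pi$ unchanged up to the swap $(x,y)\mapsto(y,x)$, and $\Phi_X$ is symmetric. So the natural quantity to compute is the sum
\[
 -\int fX\cdot\nabla u-\int gX\cdot\nabla v .
\]
We cannot integrate $fX\cdot\nabla u=u^qX\cdot\nabla u=X\cdot\nabla\bigl(u^{q+1}/(q+1)\bigr)$ directly in a useful way, since that only produces $\int \operatorname{div}X\, u^{q+1}/(q+1)=\int \operatorname{div}X\,a\,fu$. In fact this is exactly what is needed. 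By \cref{lem:regularity}, $u$ is $C^1$ and positive, so $u^{q+1}\in C^1$, and $X$ has compact support. Hence
\[
 -\int fX\cdot\nabla u=-\int X\cdot\nabla\frac{u^{q+1}}{q+1}=a\int(\operatorname{div}X)\,fu,
\]
and likewise
\[
 -\int gX\cdot\nabla v=b\int(\operatorname{div}X)\,gv .
\]

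Next I compute the divergence: $\operatorname{div}(x\eta)=n\eta+x\cdot\nabla\eta=n\eta-w$. Substituting this gives
\[
 a\int(n\eta-w)fu+b\int(n\eta-w)gv=anE_f+bnE_g-aW_f-bW_g,
\]
which is the left side of \eqref{eq:localized-virial}. The identity then follows from \eqref{eq:virial-general}. All integrals are finite because $\eta$ and $w$ are bounded and compactly supported and $fu,gv$ are continuous.

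There is essentially no obstacle. The only points to check are the regularity needed for the integration by parts, which is provided by \cref{lem:regularity} (the functions are $C^1$ on compacts, so the boundary terms vanish by compact support), and that $X\in C^1_c$, which holds since $N\ge2$.
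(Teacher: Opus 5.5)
Your proof is correct and is the same as the paper's: apply \cref{lem:virial-general} with $X=x\eta$, integrate $-\int fX\cdot\nabla u=-\frac{1}{q+1}\int X\cdot\nabla(u^{q+1})$ by parts (and likewise for $g,v$), and use $\operatorname{div}X=n\eta-w$. You should drop your opening claim that $f\nabla u$ is not an exact gradient, since $f\nabla u=u^q\nabla u=\nabla\bigl(u^{q+1}/(q+1)\bigr)$, which is exactly the fact your argument then uses.
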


\begin{proof}
By \cref{lem:virial-general}, it suffices to integrate the left side by parts. Since $f=u^q$,
\[
 -\int fX\cdot\nabla u
 =-\frac1{q+1}\int X\cdot\nabla(u^{q+1})
 =a\int(\operatorname{div}X)u^{q+1}.
\]
Since $\operatorname{div}X=n\eta-w$, this is $anE_f-aW_f$. The second term is identical.
\end{proof}

\subsection{Localization of the virial kernel}

\begin{lemma}[Localization]\label{lem:localization}
For every $\eps>0$, there is $C_\eps$ such that
\begin{equation}\label{eq:localization}
 |E_f-E_g|
 +\iint|\Phi(x,y)-\eta(x)+w(x)D(x,y)|\,\dd\pi(x,y)
 \le \eps E+C_\eps.
\end{equation}
\end{lemma}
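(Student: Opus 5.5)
Two things have to be shown: $E_f$ and $E_g$ agree up to $\eps E+C_\eps$, and the virial kernel $\Phi$ is close in $L^1(\pi)$ to the localized kernel $\eta(x)-w(x)D(x,y)$. For both, the plan is to Taylor-expand $X=x\eta$ around $x$, split pairs into near pairs $|x-y|\le\delta$ and far pairs $|x-y|>\delta$ with $\delta=\delta(\eps)$ small, and control every error by the marginal identities \eqref{eq:marginals}, the tail bound \eqref{eq:tail} and the cutoff estimates \eqref{eq:cutoff-estimates}.

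\emph{Comparison of $E_f$ and $E_g$.} By \eqref{eq:marginals}, $E_f=\iint\eta(x)\,\dd\pi$ and $E_g=\iint\eta(y)\,\dd\pi$. Hence
\[
 |E_f-E_g|\le\iint|\eta(x)-\eta(y)|\,\dd\pi .
\]
On near pairs, $|\eta(x)-\eta(y)|\le C\delta\,\max(d(x),d(y))^{N-1}$ by \eqref{eq:cutoff-estimates}. The key point is that $d(x)$ and $d(y)$ are comparable once $d\gtrsim\delta$. Then $d^{N-1}\le C\delta^{-1}d^{N}\le C\delta^{-1}\eta$ gives a bound of the form $C\delta^{1-1}\cdots$, which is not small. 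So I would refine this: use $\delta\,d^{N-1}\le \delta^{1/2}d^N+\delta^{N/2}$ (Young), giving $C\delta^{1/2}(E_f+E_g)$ plus $C\delta^{N/2}$ times the $\pi$-mass of near pairs inside $B_3$. That mass is bounded by $\int_{B_3}f(x)\int_{|x-y|\le\delta}K_s g$, and I control it by the same Young-type trick weighted by $\eta$. The points where $d\lesssim\delta^{1/2}$ have $\eta\le\delta^{N/2}$, and by \eqref{eq:L1-local} and \eqref{eq:tail} everything reduces to a universal constant.

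\emph{Pointwise expansion of $\Phi$.} Writing $z=x-y$,
\[
 X(x)-X(y)=\eta(x)z+y(\eta(x)-\eta(y)).
\]
Hence
\[
 \Phi=\eta(x)+\frac{(\eta(x)-\eta(y))\,y\cdot z}{|z|^2}.
\]
Expanding $\eta(y)=\eta(x)-\nabla\eta(x)\cdot z+O(d^{N-2}|z|^2)$ and using $\nabla\eta(x)=-w(x)\,x/|x|^2$ (radial), the leading part of the second term is
\[
 -\frac{w(x)\,(x\cdot z)(y\cdot z)}{|x|^2|z|^2}.
\]
Replacing $y$ by $x$ costs $O(w|z|)$ on $\{|x|\ge1\}$, where $w$ is supported, and this produces exactly $-w(x)D(x,y)$. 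So on near pairs,
\[
 |\Phi-\eta(x)+w(x)D|\le C|z|\bigl(d(x)^{N-2}+d(y)^{N-2}\bigr).
\]
On far pairs, $|\Phi|\le C|z|^{-1}(|X(x)|+|X(y)|)$ is bounded, and the error is controlled by $C(\eta(x)+\eta(y)+w(x))$ restricted to $|z|>\delta$. For the far part, \eqref{eq:tail} bounds the $\pi$-mass of far pairs with $x\in B_2$ by $C\delta^{2s-n}\int_{B_2}f$, and symmetrically in $y$. The remaining step is to absorb $\int_{B_2} f$ and $\int_{B_2} g$, weighted by $d^{N}$, into $\eps E+C_\eps$. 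For this I would use $d^N f\le \eps' d^N fu+C$ on $\{u\ge 1/\eps'\}$ and a universal bound on $\{u<1/\eps'\}$ via \eqref{eq:L1-local}, together with $f=u^q$.

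\emph{Main obstacle.} The near-pair error $C\iint|z|\,d^{N-2}\,\dd\pi$ must be bounded by $\eps E+C_\eps$. I would handle it with the high-order vanishing of $\eta$. Split $\{d(x)\ge\rho\}$, where $d^{N-2}\le\rho^{-2}\eta$ and the factor $|z|\le\delta$ gives $C\delta\rho^{-2}E$ (small if $\delta\ll\rho^2$), from $\{d(x)<\rho\}$, where $d^{N-2}\le\rho^{N-2}$. In the second region the measure of near pairs is universally bounded: by \eqref{eq:tail} at scale $\ell$, by dyadic summation in $|z|$, and by \eqref{eq:L1-local}, it is at most $\rho^{N-2}\int_{B_2}f\,\int_{|z|\le\delta}|z|K_sg$. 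To make this term universally bounded, I would estimate the local potential $\int_{|z|\le1}|z|K_s(z)g$ using the source bounds \eqref{eq:source-bounds} at scales $r\le1$ centred at $x$. These follow from translation invariance, since the bounds are universal for all solutions and every translate of a solution is again a solution. This near-diagonal, weight-degenerate estimate is where I expect the real work, namely the choice $N=k+n+2$, to be needed.
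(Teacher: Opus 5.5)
Your Taylor expansion of $\Phi$ is correct. Your far-pair estimate also works, and more simply than you make it: on $\{|x-y|>\delta\}$ the error is bounded, and \eqref{eq:tail} with $\int_{B_2}f\le C$ from \eqref{eq:L1-local} gives $C\delta^{2s-n}$ directly, so no absorption of $\int d^Nf$ into $E$ is needed.

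The genuine gap is the \emph{fixed} threshold $|x-y|\le\delta$ for near pairs. Near $\partial B_2$ the weight $\eta$ degenerates. A pair with $|x|=2$ and $y$ radially inside at distance $\delta$ has $\eta(x)=0$ and $\eta(y)\ge\delta^N$. For this pair $|\eta(x)-\eta(y)|=\eta(y)$, and $|\Phi-\eta(x)+w(x)D|\approx 2\eta(y)/\delta$. Neither is a small multiple of $\eta(x)+\eta(y)$, so neither can be charged to $\eps E$.

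Both of your near-pair arguments (the Young splitting for $E_f-E_g$ and the $\rho$-splitting for $\Phi$) therefore need a universal bound on the near-diagonal mass $\iint_{\{|x-y|\le\delta,\ x\in B_3\}}|x-y|^j\,\dd\pi$ with $j\in\{0,1\}$. None is available. \eqref{eq:tail} only controls separations $\ge\ell$. The $\pi$-mass at small separations is essentially the local energy $\int fu$, which is exactly what this lemma is later used to bound.

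Your dyadic route through translated source bounds does not close either. With $\int_{B_r(x)}g\le C\min\{1,r^{n-2s-\alpha}\}$, the resulting sum $\sum_j 2^{j(n-2s-1)}\min\{1,2^{-j(n-2s-\alpha)}\}$ diverges unless $\alpha<1$ or $n-2s<1$. For example, $n=3$, $s=1/2$, $p=q=3/2$ is subcritical, with $\alpha=\beta=2$ and $n-2s-\alpha=0$.

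The missing idea is to let the closeness scale shrink toward the edge of the cutoff. The paper calls $(x,y)$ close if $|x-y|\le\ell(z)=\delta d(z)^m$ for some endpoint $z$ with $d(z)>0$. Since $\ell(z)\le\delta d(z)$ and $d$ is $1$-Lipschitz, $d$ is comparable along $[x,y]$. The Taylor errors are then at most $C\delta d(z)^{N-2+m}\le C\delta d(z)^N\le C\delta(\eta(x)+\eta(y))$, and they integrate to $C\delta E$ by the marginal identities. No near-diagonal mass bound is ever needed.

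Every pair in the degenerate layer that is not this close becomes a far pair, where the error is at most $C\sum_z\bigl(d(z)^{N-1}+d(z)^N/\ell(z)\bigr)$. Applying \eqref{eq:tail} at the variable scale $\ell(x)$ costs a factor $\ell(x)^{2s-n}=\delta^{2s-n}d(x)^{-m(n-2s)}$. The high vanishing order of $\eta$ absorbs this because $N-1-m(n-2s)>0$ and $N-m(n+1-2s)>0$, and $\int_{B_2}f\le C$ then gives a constant $C_\delta$. This is where the large $N$ is actually used, not in a small-scale source estimate.
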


\begin{proof}
Fix $0<\delta<1/4$ and write
\[
 \ell(z)=\delta d(z)^m
\]
when $d(z)>0$. Call $(x,y)$ close if $|x-y|\le\ell(z)$ for at least one endpoint $z\in\{x,y\}$ with $d(z)>0$, and far otherwise. Since $d$ is one-Lipschitz, $d$ is comparable along the segment $[x,y]$ for a close pair.

The derivative matrix is
\[
 \nabla X(x)=\eta(x)\Id-w(x)\omega_x\otimes\omega_x.
\]
Taylor's formula and \eqref{eq:cutoff-estimates} give, for a close pair,
\begin{align}
 |\Phi-\eta(x)+w(x)D(x,y)|
 &\le C\delta d(z)^{N-2+m}\le C\delta d(z)^N,\label{eq:close1}\\
 |\eta(x)-\eta(y)|
 &\le C\delta d(z)^{N-1+m}\le C\delta d(z)^N.\label{eq:close2}
\end{align}
Since $d(z)^N\lesssim\eta(x)+\eta(y)$ on close pairs, the marginal identities give a contribution at most $C\delta E$.

For a far pair, every endpoint $z\in B_2$ satisfies $|x-y|>\ell(z)$. Using $0\le D\le1$, $|X(z)|\lesssim d(z)^N$, and \eqref{eq:cutoff-estimates},
\[
 |\Phi-\eta(x)+w(x)D|+|\eta(x)-\eta(y)|
 \le C\sum_{z\in\{x,y\}:d(z)>0}
 \left(d(z)^{N-1}+\frac{d(z)^N}{\ell(z)}\right).
\]
For the terms associated with $x$, \eqref{eq:tail} bounds the integral by
\[
 C\int_{B_2}f(x)
 \left(d(x)^{N-1}+\frac{d(x)^N}{\ell(x)}\right)
 \ell(x)^{2s-n}\,\dd x.
\]
The two exponents of $d$ are
\begin{align*}
 N-1-m(n-2s)&=2n+1+2s(n+3)>0,\\
 N-m(n+1-2s)&=n-1+2s(n+3)>0.
\end{align*}
Thus the far contribution is bounded by a constant $C_{\delta}$. The $y$-terms are identical.

The marginal identities also give
\[
 E_f-E_g=\iint(\eta(x)-\eta(y))\,\dd\pi(x,y).
\]
Combining the close- and far-pair estimates, the left-hand side of \eqref{eq:localization} is bounded by $C\delta E+C_\delta$. Choosing $\delta$ so that $C\delta\le\eps$ proves the lemma.
\end{proof}

\section{A weighted interval-pair inequality}\label{sec:interval}

Fix a unit vector $e\in\Sph^{n-1}$ and two distinct parallel lines. Write
\[
 x=z+se,\qquad y=z'+s'e,
 \qquad z,z'\in e^\perp,
\]
and set
\[
 \rho=|z-z'|>0,\qquad h=s-s'.
\]
The restricted fractional Riesz kernel and its directional factor are
\begin{equation}\label{eq:restricted-kernel}
 \mathcal K(s,s')=c_{n,s}(\rho^2+h^2)^{-(n-2s)/2},
 \qquad
 \mathcal D(s,s')=\frac{h^2}{\rho^2+h^2}.
\end{equation}
For a bounded interval $I=(s_-,s_+)$, define
\[
 \mathcal K_I(s')=\frac{\mathcal K(s_-,s')+\mathcal K(s_+,s')}{2},
\]
and define $\mathcal K_J(s)$ similarly for a bounded interval $J$.

\begin{lemma}[Weighted interval comparison]\label{lem:interval}
Let $I,J\subset\R$ be nonempty open intervals, possibly unbounded. Let $c_I,c_J\ge0$, with $I$ bounded whenever $c_I>0$ and $J$ bounded whenever $c_J>0$. Then
\begin{align}
 &\int_{I\times J}(c_I+c_J)\mathcal K\,\dd s\,\dd s'\nonumber\\
 &\qquad\le
 \int_{I\times J}\Bigl[
 c_I\mathcal K_I+c_J\mathcal K_J
 +\bigl((n-2s)\min(c_I,c_J)\mathcal D+|c_I-c_J|\bigr)\mathcal K
 \Bigr]\,\dd s\,\dd s'.\label{eq:interval-ineq}
\end{align}
All terms are nonnegative extended integrals; an endpoint term with zero coefficient is defined to be zero without evaluating its endpoint average.
\end{lemma}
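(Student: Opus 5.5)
The plan is to reduce to equal coefficients, prove that case by one integration by parts in each variable, and obtain the general case by truncation.

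\textbf{Reduction to equal coefficients.} Set $\mu=\min(c_I,c_J)$ and write $c_I+c_J=2\mu+|c_I-c_J|$. The excess part $|c_I-c_J|\int_{I\times J}\mathcal K$ appears identically on both sides of \eqref{eq:interval-ineq}. For the endpoint terms I keep only $\mu\mathcal K_I+\mu\mathcal K_J$, which is allowed since $c_I\ge\mu$, $c_J\ge\mu$ and $\mathcal K_I,\mathcal K_J\ge0$. So it suffices to prove
\[
 2\int_{I\times J}\mathcal K\le\int_{I\times J}\bigl(\mathcal K_I+\mathcal K_J+(n-2s)\mathcal D\,\mathcal K\bigr)
\]
when both intervals are bounded. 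If $\mu=0$ there is nothing to prove, which is also why the endpoint convention for zero coefficients is harmless.

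\textbf{Bounded intervals.} Here every integrand is bounded, because $\rho>0$. The basic identity is the one-dimensional form of \eqref{eq:logder-intro}:
\[
 \partial_s\mathcal K=-\partial_{s'}\mathcal K=-(n-2s)\frac{h}{\rho^2+h^2}\mathcal K .
\]
Let $\sigma_I,\sigma_J$ be the midpoints. I integrate by parts in $s$ with weight $A(s)=s-\sigma_I$ and in $s'$ with weight $B(s')=s'-\sigma_J$. The boundary term in $s$ is
\[
 \tfrac{|I|}{2}\bigl(\mathcal K(s_+,s')+\mathcal K(s_-,s')\bigr)=|I|\,\mathcal K_I(s'),
\]
and its $s'$-integral over $J$ is exactly $\int_{I\times J}\mathcal K_I$; the same holds for $J$. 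The interior terms combine to
\[
 (n-2s)\int_{I\times J}\frac{(A-B)h}{\rho^2+h^2}\mathcal K ,
\]
and since $A-B=h-(\sigma_I-\sigma_J)$ this equals
\[
 (n-2s)\int_{I\times J}\mathcal D\mathcal K\;-\;(\sigma_I-\sigma_J)(n-2s)\int_{I\times J}\frac{h}{\rho^2+h^2}\mathcal K .
\]

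\textbf{Main obstacle: the cross term.} The difficulty is the cross term $-(\sigma_I-\sigma_J)(n-2s)\int_{I\times J}h(\rho^2+h^2)^{-1}\mathcal K$, which has no sign a priori. By the displayed identity it equals $(\sigma_I-\sigma_J)\int_{I\times J}\partial_s\mathcal K$, a difference of endpoint integrals
\[
 (\sigma_I-\sigma_J)\int_J\bigl(\mathcal K(s_+,s')-\mathcal K(s_-,s')\bigr)\,\dd s' .
\]
The weights $A,B$ were only one admissible choice. The identity holds equally for $A=s-\tau$, $B=s'-\tau$ with a common center $\tau$, and for any average over $\tau$. With a common center the cross term disappears and the interior term is exactly $(n-2s)\int\mathcal D\mathcal K$. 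The boundary contribution is then affine in $\tau$, namely
\[
 \int_J\bigl[(s_+-\tau)\mathcal K(s_+,\cdot)+(\tau-s_-)\mathcal K(s_-,\cdot)\bigr]
\]
plus the analogous $J$-terms, so it is enough to find one $\tau$, or a probability average in $\tau$, at which it is at most the midpoint value. I would first try to choose $\tau$ between $\sigma_I$ and $\sigma_J$, using the monotonicity of $\mathcal K$ in $|h|$. The $I$-boundary gains weight at the endpoint nearer to $J$ while the $J$-boundary loses it symmetrically, and I would show these two changes have the favorable sign. If that does not close, the fallback is to bound the cross term by $|h|/(\rho^2+h^2)\le$ the $\mathcal D$-term plus endpoint values.

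\textbf{Unbounded intervals and extended integrals.} If $I$ or $J$ is unbounded, its coefficient vanishes by hypothesis. After the reduction, either $\mu=0$, which is trivial, or both intervals are bounded. The only remaining care is that $\int\mathcal K$ may equal $+\infty$ when $2s>n-1$. I would therefore prove the inequality on bounded truncations $I\cap(-L,L)$, $J\cap(-L,L)$, where all integrands are nonnegative and finite, and pass $L\to\infty$ by monotone convergence. This is legitimate in the nonnegative extended sense stated, since both sides are sums of nonnegative terms.
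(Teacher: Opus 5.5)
There is a genuine gap. Your reduction to $\mu=\min(c_I,c_J)$ and your midpoint integration by parts match the paper's, and you correctly isolate what remains: the sign condition $(\sigma_I-\sigma_J)\int_J\bigl(\mathcal K(s_+,s')-\mathcal K(s_-,s')\bigr)\dd s'\le0$. That sign is the whole content of the lemma, and your proposal does not prove it.

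\textbf{Why your two routes fail.} The common-center device cannot supply the sign. For your boundary expression $B(\tau)$ one has $\partial_\tau B=-\int_{I\times J}(\partial_s+\partial_{s'})\mathcal K=0$, because $\mathcal K$ depends only on $s-s'$. So $B(\tau)$ is the same for every $\tau$, and it equals the midpoint value plus exactly $(\sigma_I-\sigma_J)\int_{I\times J}\partial_s\mathcal K$. The gain and loss you describe when moving $\tau$ cancel identically, so no choice or average of $\tau$ changes anything. The fallback, bounding $|h|/(\rho^2+h^2)$ by the $\mathcal D$-term plus endpoint values, would add extra multiples of those terms to the right-hand side. The stated inequality does not allow this: the coefficient of $\mathcal D\mathcal K$ is exactly $(n-2s)\min(c_I,c_J)$, and the endpoint coefficients are exactly $c_I$ and $c_J$.

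\textbf{The missing fact.} What is needed is an elementary monotonicity statement. Put $k(h)=c_{n,s}(\rho^2+h^2)^{-(n-2s)/2}$, which is even and nonincreasing in $|h|$, and set $\Psi(c)=\int_{c-|J|/2}^{c+|J|/2}k(h)\,\dd h$.
\begin{itemize}
\item One has $\int_J\mathcal K(s_\pm,s')\,\dd s'=\Psi(s_\pm-\sigma_J)$.
\item $\Psi$ is even, and $\Psi'(c)=k(c+|J|/2)-k(c-|J|/2)\le0$ for $c\ge0$, so $\Psi$ is nonincreasing in $|c|$.
\item $(s_+-\sigma_J)^2-(s_--\sigma_J)^2=2|I|(\sigma_I-\sigma_J)$.
\end{itemize}
Together these give $(\sigma_I-\sigma_J)\bigl(\Psi(s_+-\sigma_J)-\Psi(s_--\sigma_J)\bigr)\le0$, which is exactly the needed inequality.

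\textbf{The paper's version.} The paper proves the same sign through the distribution of $h=s-s'$. Its density $|I\cap(J+h)|$ is a nonincreasing function of $|h-d_0|$, with $d_0=\sigma_I-\sigma_J$. So for $d_0\ge0$ the density dominates its reflection on $h>0$. Pairing against the odd function $h\mapsto h(\rho^2+h^2)^{-1}\mathcal K$, which is positive for $h>0$, gives $d_0\int_{I\times J}h(\rho^2+h^2)^{-1}\mathcal K\ge0$.

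With either argument inserted, the rest of your proof goes through. Your truncation paragraph is harmless but unnecessary: once $\mu>0$, both intervals are bounded and $\mathcal K\le c_{n,s}\rho^{2s-n}$, so all integrals are finite and the excess term cancels legitimately.
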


\begin{proof}
If at most one coefficient is positive, then $|c_I-c_J|=c_I+c_J$ and the result is pointwise. Suppose both are positive, so $I,J$ are bounded. Write $m_I,m_J$ for the midpoints and $d_0=m_I-m_J$. Integration by parts in the first variable gives
\[
 \int_I(\mathcal K-\mathcal K_I)\,\dd s
 =-\int_I(s-m_I)\partial_s\mathcal K\,\dd s.
\]
Since
\[
 \partial_s\mathcal K
 =-(n-2s)\frac{h}{\rho^2+h^2}\mathcal K,
 \qquad
 \partial_{s'}\mathcal K
 =(n-2s)\frac{h}{\rho^2+h^2}\mathcal K,
\]
adding the two integration-by-parts identities yields
\begin{equation}\label{eq:midpoint-identity}
 \int_{I\times J}(2\mathcal K-\mathcal K_I-\mathcal K_J)
 =(n-2s)\int_{I\times J}
 \frac{h^2-d_0h}{\rho^2+h^2}\mathcal K.
\end{equation}
The pushforward of $\dd s\,\dd s'$ under $h=s-s'$ has density
\[
 \varrho(h)=|I\cap(J+h)|=F(|h-d_0|),
\]
where $F$ is nonincreasing. If $d_0\ge0$ and $h>0$, then $|h-d_0|\le|h+d_0|$, hence $\varrho(h)\ge\varrho(-h)$; the inequality reverses for $d_0\le0$. Pairing positive and negative $h$ against the odd function
\[
 h\longmapsto \frac{h}{\rho^2+h^2}\mathcal K(h)
\]
gives
\[
 d_0\int_{I\times J}\frac{h}{\rho^2+h^2}\mathcal K\ge0.
\]
Thus \eqref{eq:midpoint-identity} is at most
\[
 (n-2s)\int_{I\times J}\mathcal D\mathcal K.
\]
Multiply by $c=\min(c_I,c_J)$. Each remaining individual endpoint defect is bounded by $\int_{I\times J}\mathcal K$ because its endpoint average is nonnegative. The leftover coefficients sum to $|c_I-c_J|$, which gives \eqref{eq:interval-ineq}.
\end{proof}

\begin{remark}\label{rem:line-divergence}
When $2s>n-1$, the one-dimensional kernel $\mathcal K(h)$ is not integrable at infinity. The nonnegative formulation in \cref{lem:interval} is therefore essential: no subtraction is performed on an unbounded interval with positive weight.
\end{remark}

\section{Pressure from source layers}\label{sec:pressure}

We next estimate the directional pressure term that remains in the localized virial identity.

\begin{proposition}[Localized source-layer pressure]\label{prop:pressure}
For every $\eps>0$, there is $C_\eps$ such that
\begin{equation}\label{eq:pressure}
 aW_f+bW_g
 \le
 (n-2s)\iint w(x)D(x,y)\,\dd\pi(x,y)
 +\eps E+C_\eps.
\end{equation}
\end{proposition}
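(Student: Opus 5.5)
We write the left side of \eqref{eq:pressure} in layer form and then estimate it line by line, averaging over directions near $\omega_x$. By the layer identity,
\[
 aW_f=\int w(x)\int_0^{f(x)}\bigl(u(x)-t^{1/q}\bigr)\,\dd t\,\dd x ,
\]
and similarly $bW_g=\int w\int_0^{g}(v-\tau^{1/p})\,\dd\tau$. Since $u=K_s*g$, the term $u(x)-t^{1/q}$ is an interaction with $g$ minus a level value.

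\textbf{Slicing.} Fix a small aperture $\gamma>0$ and a smooth cap density $\psi_x(e)$ on $\Sph^{n-1}$, supported in $\{e:|e-\omega_x|<\gamma\}$, with $\int\psi_x=1$ and $\psi_x$ Lipschitz in $x$ on $1\le|x|\le2$. We insert $\int\psi_x(e)\,\dd e=1$ and rewrite everything in the coordinates $x=z+se$, $y=z'+s'e$. For each direction $e$, each line $z$ and each level $t$, the set $\{s:f(z+se)>t\}$ is an open set, hence a countable union of intervals $I$. The same holds for $g$ at level $\tau$, giving intervals $J$. The slice $\{f>t\}\cap\{g>\tau\}$ of $\pi$ is then a sum of terms $\int_{I\times J}\mathcal K$. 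We apply \cref{lem:interval} with weights $c_I$ and $c_J$ equal to the averaged values of $w\psi$ along the respective lines. At endpoints of $I$ we have $f=t$, i.e.\ $u=t^{1/q}$, and the endpoint average $\int_J\mathcal K_I$ summed over all $J$ and $\tau$ reconstructs the potential $u$ at the endpoint. So the endpoint terms produce exactly the subtracted quantities $t^{1/q}$ and $\tau^{1/p}$. Integrating over levels, lines and directions should give
\[
 aW_f+bW_g\le (n-2s)\iint w\,\mathcal D\,\dd\pi+\text{error}.
\]
Here $\mathcal D$, after averaging $e$ over the cap, equals $D(x,y)$ up to $O(\gamma)$. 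That $O(\gamma)$ discrepancy is absorbed into $\eps E$ via $dw\le C\eta$ and the localization scale from \cref{lem:localization}. Near-diagonal pairs are handled as in that lemma; far pairs contribute $C_\eps$ by \eqref{eq:tail}.

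\textbf{Controlling $|c_I-c_J|\int\mathcal K$.} This error is split into three regimes.
\begin{itemize}[leftmargin=*]
 \item \emph{Short intervals}, of length $\le\delta d^m$: the weights $w\psi$ vary by $O(\delta)\,d^{N}$ across them, since $w\psi$ is Lipschitz with the bounds \eqref{eq:cutoff-estimates}. This gives $C\delta E$.
 \item \emph{Long intervals at high levels} $t>\Lambda$: the source masses \eqref{eq:source-bounds} and \eqref{eq:L1-local} show that the set of directions $e$ in which a line through $x$ carries a long superlevel interval of $f$ at height $t$ has measure $\lesssim \|f\|_{L^1(B_4)}/(t\cdot\text{length})$. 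This makes the weighted contribution $\le C\Lambda^{-\kappa}$ times an energy-type quantity, hence $\le\eps E$ for $\Lambda$ large.
 \item \emph{Low levels} $t\le\Lambda$ and $\tau\le\Lambda$: the total contribution is at most $\int_{B_2}\Lambda(u+v)$. By \eqref{eq:L1-local} this is $\le C_\Lambda$.
\end{itemize}
When $2s>n-1$, all manipulations stay in the nonnegative extended form of \cref{lem:interval}. We subtract only terms already known to be finite, such as $W_f$, $W_g$ and $\iint wD\,\dd\pi$, using $\Phi\in L^1(\pi)$.

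\textbf{Main obstacle.} The hardest step is the long-interval, high-level regime. There one must show that the mismatch $|c_I-c_J|$, which is of size comparable to $d^{N-1}$ when $I$ and $J$ reach the boundary of the cutoff, is paid for by the smallness of the set of directions. This needs a careful measure estimate of directions along which superlevel sets are long. I would first try a Fubini/Crofton bound in polar coordinates centered at $x$, combined with the high vanishing order $N$ of $\eta$ to beat the $\ell^{2s-n}$ loss in \eqref{eq:tail}.
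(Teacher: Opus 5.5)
\textbf{Verdict: genuine gap.} Your outline has the paper's architecture: the layer identity, cap averaging, slicing into superlevel components, \cref{lem:interval} with the exact endpoint values $t^{1/q}$ and $\tau^{1/p}$, and a short/high-long/low split of the errors. But as written it does not close. Two of your fixed scale choices produce errors that are not of the form $\eps E+C_\eps$, and the step you leave open depends on choosing them correctly.

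\emph{Fixed aperture.} A cap of fixed aperture $\gamma$ is not enough. The direction error is $\E_e\iint h_e(x)|D_e-D|\,\dd\pi\le2\gamma\iint w(x)\,\dd\pi=2\gamma W_f$. But $W_f$ is not bounded by $CE$, because $w/\eta\sim d^{-1}$ near $|x|=2$. The inequality $dw\le C\eta$ only helps when $|e-\omega_x|\lesssim d(x)$. The paper therefore takes aperture $\kappa d(x)$, normalized by $Z(\kappa d)\sim(\kappa d)^{n-1}$, so that $|D_e-D|\le2\kappa d$ gives $C\kappa E$. The price is $h_e\le C_\kappa d^{N-n}$ and $|\nabla h_e|\le C_\kappa d^{N-n-1}$, which is why $N$ must be large compared with $m$ and $k$.

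\emph{Fixed level cut.} A fixed cut $\Lambda$ fails for the same reason. Polar integration about $x$ gives $\Pp_e\{|I|>\ell\}\le C/(t\ell^n)$, with $\ell^n$ rather than $\ell$. So at level $t$ the expected long-interval loss is at most $C_\kappa d^{N-n}\cdot C\delta^{-n}d^{-mn}t^{-1}=C_{\kappa,\delta}d^{N-k}t^{-1}$, where $k=n(m+1)$. With $t\ge\Lambda$ this yields $\Lambda^{-1}\int d^{N-k}fu$, which is not $\lesssim E$. The cut must be $d$-dependent, $t\ge Pd^{-k}$. This gives $C_{\kappa,\delta}P^{-1}d^N$ per level, hence $C_{\kappa,\delta}P^{-1}E_f$. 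Below the cut, $\int_0^{Pd^{-k}}\E_e h_e\,\dd t\le Pd^{-k}w\le CPd^{N-1-k}\le CP$, and one integrates against $\int_{B_2}u\le C$.

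\emph{The main obstacle.} You do not need to make the mismatch $|c_I-c_J|$ be ``paid for'' pair by pair. The paper gives the weight $\chi_I=\inf_I h_e$ only to bounded components with $|I|\le2\delta\inf_I d$, and weight zero to every long or unbounded component. The latter is also what \cref{lem:interval} requires for unbounded components. It then splits $|\chi_I-\chi_J|\le|h_e(x)-h_e(y)|+(h_e(x)-\chi_I)+(h_e(y)-\chi_J)$.
\begin{itemize}
\item The first term is the mismatch of one fixed function. This is where the close/far-pair argument with \eqref{eq:tail} belongs, not in the direction error, which is pointwise. Your short-interval bound only controls a single component's deviation $h_e(x)-c_I$, not the difference between two short components that lie far apart.
\item In the other two terms, summing over all components, lines and levels of the opposite family reconstructs $u(x)$, respectively $v(y)$. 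They therefore become the single-layer losses $L_f(e)=\int u\int_0^f(h_e-\chi_f)\,\dd t\,\dd x$ and $L_g(e)$.
\end{itemize}
Since $0\le\chi\le h_e$, one also has $\min(\chi_I,\chi_J)\le h_e(x)$ in the $\mathcal D$-term. The recovery defect $(h_e-\chi)(u-t^{1/q})\le(h_e-\chi)u$ is again a layer loss. Your three regimes are then applied to $L_f$ and $L_g$, with the $d$-dependent scales above.
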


\subsection{Angular localization}

Let $\E_e$ and $\Pp_e$ denote expectation and probability with respect to normalized surface measure on $\Sph^{n-1}$. Fix $0<\kappa<1$ and $0<\delta<1/4$. Retain
\[
 \ell(x)=\delta d(x)^m.
\]
Choose a nonnegative smooth $\psi$ on $[0,\infty)$, equal to one near zero, supported in $[0,1]$, and bounded by one. For $0<r\le1$ and $\theta\in\Sph^{n-1}$, set
\[
 Z(r)=\E_e\psi(|e-\theta|/r).
\]
Rotational invariance makes $Z$ independent of $\theta$, and standard spherical coordinates give
\begin{equation}\label{eq:Z-est}
 cr^{n-1}\le Z(r)\le Cr^{n-1},
 \qquad
 |Z'(r)|\le Cr^{n-2}.
\end{equation}
Define
\begin{equation}\label{eq:he}
 h_e(x)=
 \begin{cases}
 \displaystyle
 w(x)\frac{\psi(|e-\omega_x|/(\kappa d(x)))}{Z(\kappa d(x))},&1<|x|<2,\\[2mm]
 0,&\text{otherwise}.
 \end{cases}
\end{equation}
Then
\begin{equation}\label{eq:he-normalization}
 \E_e h_e(x)=w(x),
\end{equation}
and
\begin{equation}\label{eq:he-derivative}
 0\le h_e(x)\le C_\kappa d(x)^{N-n},
 \qquad
 |\nabla h_e(x)|\le C_\kappa d(x)^{N-n-1}.
\end{equation}

\begin{lemma}[Angular errors]\label{lem:angular-errors}
One has
\begin{equation}\label{eq:he-mismatch}
 \E_e\iint|h_e(x)-h_e(y)|\,\dd\pi(x,y)
 \le C_\kappa\delta E+C_{\kappa,\delta},
\end{equation}
and, with
\[
 D_e(x,y)=\frac{((x-y)\cdot e)^2}{|x-y|^2},
\]
\begin{equation}\label{eq:direction-error}
 \E_e\iint h_e(x)D_e(x,y)\,\dd\pi(x,y)
 \le
 \iint w(x)D(x,y)\,\dd\pi(x,y)+C\kappa E.
\end{equation}
\end{lemma}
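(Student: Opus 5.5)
The plan is to treat the two estimates separately. Both follow the close/far pair splitting from the proof of \cref{lem:localization}, with the bounds \eqref{eq:he-derivative} on $h_e$ standing in for the cutoff bounds \eqref{eq:cutoff-estimates}.

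\emph{Mismatch \eqref{eq:he-mismatch}.} Keep $\ell(z)=\delta d(z)^m$ and call a pair close if $|x-y|\le\ell(z)$ for some endpoint $z$ with $d(z)>0$; as before, $d$ is comparable along $[x,y]$.

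For close pairs, use the mean value theorem with \eqref{eq:he-derivative}:
\[
|h_e(x)-h_e(y)|\le C_\kappa\,\delta\, d(z)^{m+N-n-1}.
\]
Since $m=n+3$, the exponent is $N+2\ge N$, so this is at most $C_\kappa\delta(\eta(x)+\eta(y))$. Integrating against $\pi$ and using the marginal identities \eqref{eq:marginals} gives $C_\kappa\delta E$, uniformly in $e$.

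For far pairs, bound $|h_e(x)-h_e(y)|\le h_e(x)+h_e(y)\le C_\kappa\sum_z d(z)^{N-n}$. Then \eqref{eq:tail} with $\ell=\ell(z)$ reduces the $x$-terms to
\[
C_\kappa\int_{B_2} f\, d^{\,N-n}\,\ell^{2s-n}\,dx .
\]
The exponent is $N-n-m(n-2s)=2+2s(n+3)>0$, so by \eqref{eq:L1-local} this is at most $C_{\kappa,\delta}$. The $y$-terms are symmetric, and averaging over $e$ preserves both bounds.

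\emph{Direction error \eqref{eq:direction-error}.} Start from $\E_e h_e(x)D_e=w(x)D(x,y)+\E_e h_e(x)(D_e-D)$. On the support of $h_e(x)$ we have $|e-\omega_x|\le\kappa d(x)\le\kappa$, and $|D_e-D|\le 2|e-\omega_x|$, because $D_e=(\sigma\cdot e)^2$ with $\sigma$ a unit vector. Hence, by \eqref{eq:he-normalization},
\[
\E_e h_e(x)\,|D_e-D|\le 2\kappa\, d(x)\,\E_e h_e(x)=2\kappa\, d(x)w(x)\le C\kappa\,\eta(x),
\]
using $dw\le C\eta$ from \eqref{eq:cutoff-estimates}. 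Integrating over $\pi$ gives $C\kappa\int \eta f u\le C\kappa E$.

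One must also justify exchanging $\E_e$ with $\iint\,d\pi$. Tonelli applies since everything is nonnegative, or else one can use the integrability shown in \cref{lem:virial-general}.

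The main obstacle is the bookkeeping in the far-pair exponent for the mismatch. The normalization $Z(\kappa d)^{-1}\sim d^{1-n}$ costs about $n$ powers of $d$, and the gradient bound costs one more. The check is that $N=k+n+2$ leaves a positive margin both against $\ell^{2s-n}$ in the far case and against $\ell$ in the close case, which the computation above confirms.
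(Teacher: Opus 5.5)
Your proposal is correct and follows the paper's proof essentially step for step. Both use the close/far split with $\ell=\delta d^m$, the gradient bound on close pairs, the sup bound combined with the tail estimate \eqref{eq:tail} on far pairs, and $|D_e-D|\le 2|e-\omega_x|\le 2\kappa d(x)$ together with $dw\le C\eta$. There is one harmless arithmetic slip: $N-n-m(n-2s)=n+2+2s(n+3)$, not $2+2s(n+3)$, but only its positivity is used, so the conclusion is unaffected.
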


\begin{proof}
On a close pair, integrate \eqref{eq:he-derivative} along the segment to obtain
\[
 |h_e(x)-h_e(y)|
 \le C_\kappa\delta d(z)^{N-n-1+m}
 =C_\kappa\delta d(z)^{N+2}
 \le C_\kappa\delta d(z)^N.
\]
The interaction marginals give a contribution $C_\kappa\delta E$. On far pairs, \eqref{eq:he-derivative} and \eqref{eq:tail} give, for the $x$-endpoint contribution,
\[
 C_\kappa\int_{B_2}f(x)d(x)^{N-n}\ell(x)^{2s-n}\,\dd x,
\]
whose $d$-exponent is
\[
 N-n-m(n-2s)=n+2+2s(n+3)>0.
\]
Thus the far contribution is bounded by $C_{\kappa,\delta}$, proving \eqref{eq:he-mismatch}.

On the support of $h_e(x)$,
\[
 |D_e(x,y)-D(x,y)|\le2|e-\omega_x|\le2\kappa d(x).
\]
Average, use \eqref{eq:he-normalization} and $dw\le C\eta$, and apply the marginal identity to obtain \eqref{eq:direction-error}.
\end{proof}

\subsection{Measurable layer intervals and their loss}

For $0<t<f(x)$, let $I$ be the connected component containing $x$ of the one-dimensional superlevel set $\{f>t\}$ on the line $x+\R e$. Assign
\begin{equation}\label{eq:chi-I}
 \chi_I=
 \begin{cases}
 \inf_I h_e,& I\text{ bounded and }|I|\le2\delta\inf_I d,\\
 0,&\text{otherwise}.
 \end{cases}
\end{equation}
Set $\chi_f(e,x,t)=\chi_I$. Define $\chi_g(e,y,t')$ analogously. Then
\[
 0\le\chi_f(e,x,t)\le h_e(x),
 \qquad
 0\le\chi_g(e,y,t')\le h_e(y).
\]
The exit distances of these components, and hence their endpoints, depend measurably on the parameters. Indeed, the condition that a fixed compact line segment remain inside an open superlevel set is open, while the infimum in \eqref{eq:chi-I} may be computed over rational offsets. This allows us to use Tonelli's theorem without further comment below.

Define the nonnegative losses
\begin{align}
 L_f(e)&=\int u(x)\int_0^{f(x)}(h_e(x)-\chi_f(e,x,t))\,\dd t\,\dd x,\label{eq:Lf}\\
 L_g(e)&=\int v(y)\int_0^{g(y)}(h_e(y)-\chi_g(e,y,t'))\,\dd t'\,\dd y.\label{eq:Lg}
\end{align}

\begin{lemma}[Averaged layer loss]\label{lem:layer-loss}
For every $P\ge1$,
\begin{equation}\label{eq:layer-loss}
 \E_e(L_f(e)+L_g(e))
 \le
 \bigl(C_\kappa\delta+C_{\kappa,\delta}P^{-1}\bigr)E+CP.
\end{equation}
\end{lemma}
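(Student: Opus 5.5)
The plan is to bound the loss pointwise in $(x,t)$, average over $e$, and sort the contributions by why $\chi_f(e,x,t)<h_e(x)$. By symmetry it suffices to treat $L_f$; the argument for $L_g$ is the same with $(f,u)$ replaced by $(g,v)$. For fixed $(e,x,t)$ with $0<t<f(x)$ and $I$ the layer component through $x$, there are three possibilities:
\begin{enumerate}[label=(\roman*)]
\item $I$ is bounded and short ($|I|\le 2\delta\inf_I d$). Then the loss is $h_e(x)-\inf_I h_e$.
\item $I$ is unbounded or long, and the level is low, $t\le P$. Then the loss is $h_e(x)$.
\item $I$ is unbounded or long, and the level is high, $t>P$. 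Then the loss is again $h_e(x)$.
\end{enumerate}
I will estimate the three regions separately and integrate against $u(x)\,\dd t\,\dd x$.

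\emph{Case (i).} Since $d$ is one-Lipschitz and $|I|\le2\delta\inf_I d$, $d$ is comparable to $d(x)$ on $I$. The gradient bound \eqref{eq:he-derivative} then gives $h_e(x)-\inf_I h_e\le C_\kappa\delta\,d(x)^{N-n}$. I will not use this crude form directly. Instead I keep track that $h_e(x)\neq0$ only when $e$ lies in the cap $|e-\omega_x|\le\kappa d(x)$, and that $\nabla h_e$ is a sum of a radial part varying on scale $d$ and an angular part varying on scale $\kappa d$. Averaging over $e$ with \eqref{eq:Z-est} should then give
\[
 \E_e\bigl(h_e(x)-\chi_f\bigr)\le C_\kappa\delta\,\bigl(d(x)\,w(x)+\eta(x)\bigr)\le C_\kappa\delta\,\eta(x),
\]
using $dw\le C\eta$. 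Integrating against $u\,\dd t\,\dd x$ over $0<t<f(x)$ produces $C_\kappa\delta\int\eta fu\le C_\kappa\delta E$. The delicate point is to extract the extra factor of $d$; the gain comes from $|I|\le2\delta d$ multiplying a gradient of size $\lesssim h_e/(\kappa d)$.

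\emph{Case (ii).} The loss is at most $\int u(x)\int_0^{\min(f,P)}h_e(x)\,\dd t\,\dd x\le P\int u\,h_e$. After averaging, \eqref{eq:he-normalization} turns this into $P\int_{B_2}wu$. Since $w$ is bounded and supported in $B_2$, \eqref{eq:L1-local} bounds this by $CP$. This is the universal additive term.

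\emph{Case (iii): the main obstacle.} Here I want the probability, over $e$ in the cap, that the line $x+\R e$ carries a segment of length at least $2\delta\inf d\gtrsim\delta d(x)$ inside $\{f>t\}$. If that happens, the segment carries $f$-mass at least $t\,\delta d(x)$. Integrating over the cap, which has aperture $\kappa d$, a polar-coordinates (Chebyshev) argument should give
\[
 \Pp_e(\text{long})\lesssim_{\kappa,\delta}\frac{1}{t}\,d(x)^{-A}\int_{B(x,C d(x))}f(y)|x-y|^{1-n}\,\dd y
\]
for some power $A$. Unbounded components need a separate word: such an $I$ leaves the region near $x$, so it necessarily contains a long piece, and the same bound applies. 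Next I dominate $|x-y|^{1-n}$ by $d^{\,1-2s}|x-y|^{2s-n}$ on the ball and use $\int K_s(x-y)f(y)\,\dd y=v(x)$. Then
\[
 \int_P^{f}\Pp_e\,\dd t\le \frac{f}{P}\,C_{\kappa,\delta}\,d^{-A'}\,v(x).
\]
The issue is that this produces the cross quantity $fuv$ rather than $fu$. So I would first try the version that keeps $t$ in the Chebyshev step: use $\int_P^f t^{-1}\cdot t\,\dd t$-type bookkeeping together with the $g$-side mass, or apply Lemma~\ref{lem:interval}-free H\"older with the high power $N$ of $\eta$, to absorb $d^{-A'}$ into $\eta$ and reduce to $P^{-1}(E_f+E_g)$ via the marginal identity $\pi(\dd x,\R^n)=fu$. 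This yields $C_{\kappa,\delta}P^{-1}E$.

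Summing the three cases gives \eqref{eq:layer-loss}.
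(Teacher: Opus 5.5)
There is a genuine gap in each of your two nontrivial cases. \emph{Case (i).} The extra factor of $d$ you hope to extract is not there. With $|I|\le 2\delta\inf_I d\lesssim\delta d$ and a gradient of size $\lesssim h_e/(\kappa d)$, you get $h_e(x)-\inf_I h_e\lesssim(\delta/\kappa)h_e(x)$. Its $e$-average is $(\delta/\kappa)w(x)$, and near $|x|=2$ one has $w\sim d^{N-1}$ while $\eta\sim d^N$; only $dw\le C\eta$ holds. Nor is this just a lossy estimate. The weight $h_e$ genuinely varies on the radial scale $d$: for $e=\omega_x$, $h_e\sim d^{N-n}$ along the ray. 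So on a retained outward interval of length comparable to $\delta d$ the loss really is of order $\delta h_e(x)$. Integrating then produces a quantity like $\delta W_f$, which is not of the form $C_\kappa\delta E$. The remedy is to call a component short only if $|I|\le\ell=\delta d^m$ with $m=n+3$. Then \eqref{eq:he-derivative} bounds the loss by $C_\kappa\delta d^{N-n-1+m}=C_\kappa\delta d^{N+2}\le C_\kappa\delta\eta(x)$. Every other component, including retained ones with $\ell<|I|\le2\delta\inf_I d$, is charged its full weight $h_e(x)$.

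\emph{Case (iii).} Here what you have is not yet an argument. Weighting the polar integral by $|x-y|^{1-n}$ brings in $v(x)$ and the uncontrolled product $fuv$; the domination $|x-y|^{1-n}\le d^{1-2s}|x-y|^{2s-n}$ also fails when $2s>1$. And a factor $d^{-A'}$ can never be absorbed into $\eta$. Two ideas are missing.

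First, use the weight $r^{n-1}$. Let $F$ be the set of directions whose forward segment of length $\ell/2$ lies in $\{f>t\}$. Then $\int_{B_{\ell/2}(x)}f\ge ct\ell^n\Pp_e(F)$. Long or unbounded components force $e\in F\cup(-F)$, so the universal bound \eqref{eq:L1-local} gives $\Pp_e\{|I|>\ell\}\le C/(t\ell^n)$, with nothing solution-dependent.

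Second, the split between low and high levels must depend on $d$. For $t\ge Pd^{-k}$ with $k=n(m+1)$,
$\sup_e h_e\cdot\Pp_e\le C_\kappa d^{N-n}\cdot C\delta^{-n}d^{-mn}P^{-1}d^{k}=C_{\kappa,\delta}P^{-1}d^N$.
Integrating in $t$ and against $u$ gives $C_{\kappa,\delta}P^{-1}E_f$. For $t<Pd^{-k}$, \eqref{eq:he-normalization} gives $\int_0^{Pd^{-k}}\E_e h_e\,\dd t=Pd^{-k}w\le CPd^{N-1-k}\le CP$, since $N-1-k=n+1$; \eqref{eq:L1-local} then turns this into $CP$. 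With your fixed threshold $t>P$, the high levels would leave $C_{\kappa,\delta}P^{-1}\int d^{N-n-mn}fu$ (or $d^{N-2n}$ with your length scale), which $E$ does not control.
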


\begin{proof}
It is enough to treat the $f,u$ term. Fix $x$ with $1<|x|<2$, write $d=d(x)$ and $\ell=\delta d^m$. If a component $I$ has $|I|\le\ell$, then
\[
 \inf_I d\ge d-\ell\ge(1-\delta)d,
 \qquad
 |I|\le\delta d\le2\delta\inf_I d,
\]
so it is retained. The derivative estimate \eqref{eq:he-derivative} gives an entire weight loss at most $C_\kappa\delta d^N$.

If $|I|>\ell$, at least one of its two portions from $x$ contains a segment of length $\ell/2$. Let $F$ be the directions whose forward segment of that length remains in $\{f>t\}$. Polar integration centered at $x$ gives
\[
 \int_{B_{\ell/2}(x)}f
 \ge c\,t\ell^n\Pp_e(F).
\]
The long-interval event is contained in $F\cup(-F)$, hence, using \eqref{eq:L1-local},
\begin{equation}\label{eq:long-prob}
 \Pp_e\{|I|>\ell\}\le\frac{C}{t\ell^n}.
\end{equation}
For high levels $t\ge Pd^{-k}$, the expected long-interval loss is bounded by the supremum of $h_e(x)$ times \eqref{eq:long-prob}, hence by
\[
 C_\kappa d^{N-n}\,C\delta^{-n}P^{-1}d^{k-mn}
 =C_{\kappa,\delta}P^{-1}d^N,
\]
because $k=n(m+1)$. Integrating the short- and high-long losses in $t$ and then against $u(x)\,\dd x$ gives
\[
 \bigl(C_\kappa\delta+C_{\kappa,\delta}P^{-1}\bigr)E_f.
\]

For the remaining levels, use \eqref{eq:he-normalization}:
\[
 \int_0^{\min\{f(x),Pd^{-k}\}}
 \E_e(h_e(x)-\chi_f(e,x,t))\,\dd t
 \le Pd^{-k}w(x)
 \le CPd^{N-1-k}\le CP,
\]
since $N-1-k=n+1$. Integrating against $u$ and using \eqref{eq:L1-local} gives $CP$. The proof for $g,v$ is identical.
\end{proof}

\subsection{Endpoint recovery and proof of the pressure estimate}

Suppose $\chi_I>0$ for an $f$-layer at level $t$. Then $I$ is bounded, and continuity plus maximality gives
\[
 f(x_-)=f(x_+)=t.
\]
Since $f=u^q$,
\begin{equation}\label{eq:endpoint-exact}
 u(x_-)=u(x_+)=t^{1/q}.
\end{equation}
Write $u_I=(u(x_-)+u(x_+))/2=t^{1/q}$. Define $v_J=t'^{1/p}$ analogously for a retained $g$-component. If a component weight is zero, every product involving its endpoint average is defined to be zero.

Fix a direction $e$ and levels $t,t'>0$. On two distinct parallel lines, apply \cref{lem:interval} to every pair of layer components $I,J$ with $c_I=\chi_I$, $c_J=\chi_J$. Since $\min(\chi_I,\chi_J)\le h_e(x)$,
\begin{align}
 \int_{I\times J}(\chi_I+\chi_J)\mathcal K
 \le
 \int_{I\times J}\Bigl[
 \chi_I\mathcal K_I+\chi_J\mathcal K_J
 +\bigl((n-2s)h_e(x)D_e(x,y)+|\chi_I-\chi_J|\bigr)\mathcal K
 \Bigr].\label{eq:layer-interval}
\end{align}
All terms are nonnegative extended integrals. Sum over components and integrate in the transverse variables and in $t,t'$. Because $n\ge2$, the coincident-line set has zero measure in $e^\perp\times e^\perp$ and can be omitted. Tonelli gives
\begin{align*}
 U_f(e)&=\int\int_0^{f(x)}\chi_f(e,x,t)u(x)\,\dd t\,\dd x,\\
 U_g(e)&=\int\int_0^{g(y)}\chi_g(e,y,t')v(y)\,\dd t'\,\dd y,
\end{align*}
for the two left-hand terms, while the endpoint terms become
\begin{align*}
 V_f(e)&=\int\int_0^{f(x)}\chi_f(e,x,t)u_I\,\dd t\,\dd x,\\
 V_g(e)&=\int\int_0^{g(y)}\chi_g(e,y,t')v_J\,\dd t'\,\dd y.
\end{align*}
This follows because integrating the endpoint kernels against all $g$-layers reconstructs $K_s*g=u$; the argument for $v$ is identical.

Set
\[
 H_f(e)=\int h_e fu,
 \qquad
 H_g(e)=\int h_e gv.
\]
Each of these aggregated quantities is finite before any subtraction is made. Indeed, $U_f\le H_f$, $U_g\le H_g$, and by \eqref{eq:endpoint-exact}, $0\le V_f\le U_f$ and $0\le V_g\le U_g$. Also $D_e\le1$. Finally,
\begin{align*}
 |\chi_f(e,x,t)-\chi_g(e,y,t')|
 &\le |h_e(x)-h_e(y)|\\
 &\quad +(h_e(x)-\chi_f(e,x,t))+(h_e(y)-\chi_g(e,y,t')).
\end{align*}
Accordingly, the finite quantity
\begin{align*}
 S_e&=\int\int_0^{f(x)}\chi_f(e,x,t)(u(x)-u_I)\,\dd t\,\dd x\\
 &\quad+\int\int_0^{g(y)}\chi_g(e,y,t')(v(y)-v_J)\,\dd t'\,\dd y
\end{align*}
satisfies
\begin{equation}\label{eq:Se}
 S_e\le
 (n-2s)\iint h_e(x)D_e(x,y)\,\dd\pi
 +\iint|h_e(x)-h_e(y)|\,\dd\pi
 +L_f(e)+L_g(e).
\end{equation}

The exact layer identities are
\begin{equation}\label{eq:layer-id-f}
 \int_0^{f(x)}\bigl(u(x)-t^{1/q}\bigr)\,\dd t
 =\frac{f(x)u(x)}{q+1}=af(x)u(x),
\end{equation}
and
\begin{equation}\label{eq:layer-id-g}
 \int_0^{g(y)}\bigl(v(y)-t^{1/p}\bigr)\,\dd t
 =\frac{g(y)v(y)}{p+1}=bg(y)v(y).
\end{equation}
By \eqref{eq:endpoint-exact}, for every layer,
\[
 h_e(x)(u-t^{1/q})-\chi_f(e,x,t)(u-u_I)
 =(h_e(x)-\chi_f(e,x,t))(u-t^{1/q})
 \le(h_e(x)-\chi_f(e,x,t))u.
\]
We therefore obtain
\begin{equation}\label{eq:recover-pressure}
 aH_f(e)+bH_g(e)\le S_e+L_f(e)+L_g(e).
\end{equation}
Combining \eqref{eq:Se} with \eqref{eq:recover-pressure}, averaging over $e$, and using \cref{lem:angular-errors,lem:layer-loss} together with $\E_eH_f=W_f$ and $\E_eH_g=W_g$, we obtain
\begin{align}
 aW_f+bW_g
 &\le (n-2s)\iint w(x)D(x,y)\,\dd\pi\nonumber\\
 &\quad+\bigl(C\kappa+C_\kappa\delta+C_{\kappa,\delta}P^{-1}\bigr)E
 +C_{\kappa,\delta,P}.\label{eq:pressure-parameters}
\end{align}
Choosing the parameters in the order $\kappa\downarrow0$, then $\delta\downarrow0$, and finally $P\uparrow\infty$ proves \cref{prop:pressure}.

\section{Energy bound and scaling contradiction}\label{sec:closure}

We can now close the argument. Define
\begin{equation}\label{eq:T-Q}
 T=\iint w(x)D(x,y)\,\dd\pi(x,y),
 \qquad
 Q=\iint(\Phi(x,y)-\eta(x)+w(x)D(x,y))\,\dd\pi(x,y).
\end{equation}
By \cref{lem:localization}, for every $\eps>0$,
\begin{equation}\label{eq:Q-bound}
 |Q|+|E_f-E_g|\le\eps E+C_\eps.
\end{equation}
Since
\[
 \iint\Phi\,\dd\pi=E_f-T+Q,
\]
the localized virial identity \eqref{eq:localized-virial} becomes
\begin{equation}\label{eq:virial-rearranged}
 c_1E_f+c_2E_g
 =aW_f+bW_g-(n-2s)T+(n-2s)Q,
\end{equation}
where
\begin{equation}\label{eq:c1c2}
 c_1=an-(n-2s),
 \qquad
 c_2=bn.
\end{equation}
The pressure estimate \eqref{eq:pressure} and \eqref{eq:Q-bound} imply
\begin{equation}\label{eq:weighted-energy-upper}
 c_1E_f+c_2E_g\le C\eps E+C_\eps.
\end{equation}
Set
\begin{equation}\label{eq:gamma}
 \gamma=c_1+c_2
 =\frac{n}{q+1}+\frac{n}{p+1}-(n-2s).
\end{equation}
The subcritical assumption is exactly $\gamma>0$. Moreover,
\[
 c_1E_f+c_2E_g
 =\frac\gamma2E+\frac{c_1-c_2}{2}(E_f-E_g).
\]
Thus \eqref{eq:Q-bound} and \eqref{eq:weighted-energy-upper} give
\[
 \frac\gamma2E\le C\eps E+C_\eps.
\]
Choosing $\eps$ so that $C\eps\le\gamma/4$ yields the uniform local energy bound
\begin{equation}\label{eq:universal-energy}
 \boxed{E\le C_{n,s,p,q}.}
\end{equation}

The last step is to compare this uniform bound with the scaling of the system. From \eqref{eq:alpha-beta} and \eqref{eq:a-b},
\begin{equation}\label{eq:gamma-scaling}
 \gamma
 =\left(1-\frac1{p+1}-\frac1{q+1}\right)
 (\alpha+\beta+2s-n).
\end{equation}
The first factor is positive because $pq>1$. Hence subcriticality gives
\begin{equation}\label{eq:delta-positive}
 \delta_*:=\alpha+\beta+2s-n>0.
\end{equation}
Apply \eqref{eq:universal-energy} to the scaled solution $(u_R,v_R)$. Since $\eta=1$ on $B_1$,
\begin{align*}
 E[u_R,v_R]
 &\ge\int_{B_1}\bigl(u_R^{q+1}+v_R^{p+1}\bigr)\,\dd x\\
 &=R^{\delta_*}\int_{B_R}\bigl(u^{q+1}+v^{p+1}\bigr)\,\dd x.
\end{align*}
For $R\ge1$, the last integral is bounded below by the fixed positive number
\[
 c_0=\int_{B_1}(u^{q+1}+v^{p+1})\,\dd x>0.
\]
Hence
\[
 E[u_R,v_R]\ge c_0R^{\delta_*}\longrightarrow\infty,
\]
which contradicts the solution-independent bound \eqref{eq:universal-energy}. The proof of \cref{thm:main} is complete.

\section{Further remarks}

\begin{remark}[PDE formulation]
The integral system \eqref{eq:system} is the exact Riesz-potential form of
\[
 (-\Delta)^s u=v^p,\qquad (-\Delta)^s v=u^q.
\]
Proposition~\ref{prop:pde-potential} shows that the two formulations agree in the whole-space class used in this paper. More generally, the interval-pressure argument applies to any PDE solution class for which the same potential representation is available without a nonzero $s$-harmonic remainder.
\end{remark}

\begin{remark}[Why the unweighted problem is simpler than the weighted H\'enon case]
For a weighted layer $|x|^B u(x)^q>t$, the endpoint relation is $u=t^{1/q}|x|^{-B/q}$, so the endpoint value changes along the component and creates an additional short-interval error. In the present unweighted problem, every retained endpoint satisfies the exact identity $u_I=t^{1/q}$. This removes the endpoint-weight error from the pressure recovery step \eqref{eq:recover-pressure}. For the local weighted problem, see \cite{FazlyGhoussoub2014,HuangZou2026,OpenAI2026,Phan2012}.
\end{remark}

\begin{remark}[The one-dimensional case]
The argument uses $n\ge2$ at one specific point: after decomposing space into lines parallel to $e$, the coincident-line set is negligible in $e^\perp\times e^\perp$. This fails in dimension one, so the case $n=1$ requires a separate argument.
\end{remark}

\begin{remark}[Possible extensions]
Two natural extensions are weighted systems \cite{DaiQin2021,Hajlaoui2022} and systems with unequal fractional orders. The same geometric mechanism should still be relevant, but the endpoint layer values and the homogeneity coefficients no longer match, so the pressure comparison would need additional bookkeeping.
\end{remark}

\section*{Acknowledgments}
The proof is inspired by the localized potential-kernel virial and source-layer interval-pressure method introduced in the 2026 preprint \emph{The Subcritical H\'enon--Lane--Emden Conjecture} by OpenAI \cite{OpenAI2026}. Our argument isolates the kernel-level part of that method and adapts it from the Newton kernel to the fractional Riesz kernel $|x|^{2s-n}$, $0<s<1$.


\begin{thebibliography}{99}

\bibitem{AdamsHedberg1996}
D. R. Adams and L. I. Hedberg,
\emph{Function Spaces and Potential Theory},
Grundlehren der mathematischen Wissenschaften, vol. 314,
Springer, Berlin, 1996.

\bibitem{BidautVeron1989}
M.-F. Bidaut-V\'eron,
Local and global behavior of solutions of quasilinear equations of Emden--Fowler type,
\emph{Arch. Rational Mech. Anal.} \textbf{107} (1989), 293--324.

\bibitem{BidautVeronGiacomini2010}
M.-F. Bidaut-V\'eron and H. Giacomini,
A new dynamical approach of Emden--Fowler equations and systems,
\emph{Adv. Differential Equations} \textbf{15} (2010), 1033--1082.

\bibitem{CaffarelliSilvestre2007}
L. Caffarelli and L. Silvestre,
An extension problem related to the fractional Laplacian,
\emph{Comm. Partial Differential Equations} \textbf{32} (2007), 1245--1260.

\bibitem{CaristiDAmbrosioMitidieri2008}
G. Caristi, L. D'Ambrosio, and E. Mitidieri,
Representation formulae for solutions to some classes of higher order systems and related Liouville theorems,
\emph{Milan J. Math.} \textbf{76} (2008), 27--67.

\bibitem{ChenFangYang2015}
W. Chen, Y. Fang, and R. Yang,
Liouville theorems involving the fractional Laplacian on a half space,
\emph{Adv. Math.} \textbf{274} (2015), 167--198.

\bibitem{ChenHuyuan2018}
H. Chen,
Liouville theorem for the fractional Lane--Emden equation in an unbounded domain,
\emph{J. Math. Pures Appl.} \textbf{111} (2018), 21--46.

\bibitem{ChenJinLiLim2005}
W. Chen, C. Jin, C. Li, and J. Lim,
Weighted Hardy--Littlewood--Sobolev inequalities and systems of integral equations,
\emph{Discrete Contin. Dyn. Syst. Suppl.} (2005), 164--172.

\bibitem{ChenLi2009}
W. Chen and C. Li,
An integral system and the Lane--Emden conjecture,
\emph{Discrete Contin. Dyn. Syst.} \textbf{24} (2009), 1167--1184.

\bibitem{ChenLiOu2005}
W. Chen, C. Li, and B. Ou,
Classification of solutions for a system of integral equations,
\emph{Comm. Partial Differential Equations} \textbf{30} (2005), 59--65.

\bibitem{ChenLiOu2006}
W. Chen, C. Li, and B. Ou,
Classification of solutions for an integral equation,
\emph{Comm. Pure Appl. Math.} \textbf{59} (2006), 330--343.

\bibitem{ChenSong1998}
Z.-Q. Chen and R. Song,
Estimates on Green functions and Poisson kernels for symmetric stable processes,
\emph{Math. Ann.} \textbf{312} (1998), 465--501.

\bibitem{ChengHuang2019}
Z. Cheng and G. Huang,
A Liouville theorem for the subcritical Lane--Emden system,
\emph{Discrete Contin. Dyn. Syst.} \textbf{39} (2019), 1359--1377.

\bibitem{ChengHuangLi2016}
Z. Cheng, G. Huang, and C. Li,
On the Hardy--Littlewood--Sobolev type systems,
\emph{Commun. Pure Appl. Anal.} \textbf{15} (2016), 2059--2074.

\bibitem{DaiLiuLu2016}
W. Dai, Z. Liu, and G. Lu,
Liouville type theorems for PDE and IE systems involving fractional Laplacian on a half space,
\emph{Potential Anal.} \textbf{46} (2016), 569--588.

\bibitem{DaiQin2021}
W. Dai and G. Qin,
Liouville type theorem for critical order H\'enon--Lane--Emden type equations on a half space and its applications,
\emph{J. Funct. Anal.} \textbf{281} (2021), 109227.

\bibitem{DAmbrosioMitidieri2014}
L. D'Ambrosio and E. Mitidieri,
Hardy--Littlewood--Sobolev systems and related Liouville theorems,
\emph{Discrete Contin. Dyn. Syst. Ser. S} \textbf{7} (2014), 653--671.

\bibitem{DavilaDupaigneWei2017}
J. D\'avila, L. Dupaigne, and J. Wei,
On the fractional Lane--Emden equation,
\emph{Trans. Amer. Math. Soc.} \textbf{369} (2017), 6087--6104.

\bibitem{DiNezza2012}
E. Di Nezza, G. Palatucci, and E. Valdinoci,
Hitchhiker's guide to the fractional Sobolev spaces,
\emph{Bull. Sci. Math.} \textbf{136} (2012), 521--573.

\bibitem{DouZhu2015}
M. Dou and M. Zhu,
Sharp Hardy--Littlewood--Sobolev inequality on the upper half space,
\emph{Int. Math. Res. Not.} \textbf{2015} (2015), 651--687.

\bibitem{DuongNguyen2021}
A. T. Duong and V. H. Nguyen,
Liouville type theorems for some fractional elliptic problems,
\emph{Nonlinear Anal.} \textbf{210} (2021), 112383.

\bibitem{DupaigneFarinaPetitt2023}
L. Dupaigne, A. Farina, and T. Petitt,
Liouville-type theorems for the Lane--Emden equation in the half-space and cones,
\emph{J. Funct. Anal.} \textbf{284} (2023), 109906.

\bibitem{DupaigneSirakovSouplet2022}
L. Dupaigne, B. Sirakov, and P. Souplet,
A Liouville-type theorem for the Lane--Emden equation in a half-space,
\emph{Int. Math. Res. Not.} \textbf{2022} (2022), 9024--9043.

\bibitem{FallWeth2012}
M. M. Fall and T. Weth,
Nonexistence results for a class of fractional elliptic boundary value problems,
\emph{J. Funct. Anal.} \textbf{263} (2012), 2205--2227.

\bibitem{FallWeth2016}
M. M. Fall and T. Weth,
Monotonicity and nonexistence results for some fractional elliptic problems in the half-space,
\emph{Commun. Contemp. Math.} \textbf{18} (2016), 1550012.

\bibitem{FazlyGhoussoub2014}
M. Fazly and N. Ghoussoub,
On the H\'enon--Lane--Emden conjecture,
\emph{Discrete Contin. Dyn. Syst.} \textbf{34} (2014), 2513--2533.

\bibitem{FazlyWei2017}
M. Fazly and J. Wei,
On finite Morse index solutions of higher order fractional Lane--Emden equations,
\emph{Amer. J. Math.} \textbf{139} (2017), 433--460.

\bibitem{GidasSpruck1981a}
B. Gidas and J. Spruck,
Global and local behavior of positive solutions of nonlinear elliptic equations,
\emph{Comm. Pure Appl. Math.} \textbf{34} (1981), 525--598.

\bibitem{GidasSpruck1981b}
B. Gidas and J. Spruck,
A priori bounds for positive solutions of nonlinear elliptic equations,
\emph{Comm. Partial Differential Equations} \textbf{6} (1981), 883--901.

\bibitem{Hajlaoui2022}
H. Hajlaoui,
Liouville type theorems for stable solutions of the weighted fractional Lane--Emden system,
\emph{Discrete Contin. Dyn. Syst.} \textbf{42} (2022), 5665--5681.

\bibitem{Hang2007}
F. Hang,
On the integral systems related to Hardy--Littlewood--Sobolev inequality,
\emph{Math. Res. Lett.} \textbf{14} (2007), 373--383.

\bibitem{HuangZou2026}
L.-H. Huang and W. Zou,
Liouville-type theorems for stable solutions of the H\'enon--Lane--Emden system,
\emph{J. London Math. Soc.} \textbf{113} (2026), e70412.

\bibitem{JarohsWeth2016}
S. Jarohs and T. Weth,
Symmetry via antisymmetric maximum principles in nonlocal problems of variable order,
\emph{Ann. Mat. Pura Appl.} \textbf{195} (2016), 273--291.

\bibitem{KuiLiLiWei2025}
K. Li, M. Li, and J. Wei,
On a new region for the Lane--Emden conjecture in higher dimensions,
\emph{arXiv:2510.06613} (2025).

\bibitem{Landkof1972}
N. S. Landkof,
\emph{Foundations of Modern Potential Theory},
Grundlehren der mathematischen Wissenschaften, vol. 180,
Springer, Berlin, 1972.

\bibitem{Le2021}
P. Le,
Method of scaling spheres for integral and polyharmonic systems,
\emph{J. Differential Equations} \textbf{298} (2021), 132--158.

\bibitem{Lieb1983}
E. H. Lieb,
Sharp constants in the Hardy--Littlewood--Sobolev and related inequalities,
\emph{Ann. of Math. (2)} \textbf{118} (1983), 349--374.

\bibitem{Li2022}
H. Li,
A Liouville theorem for the H\'enon--Lane--Emden system in four and five dimensions,
\emph{Adv. Nonlinear Stud.} \textbf{22} (2022), 517--533.

\bibitem{LiSouplet2025}
Y. Li and P. Souplet,
A Liouville theorem for the Lane--Emden system in the half-space,
\emph{J. Funct. Anal.} \textbf{289} (2025), 111107.

\bibitem{LiZhang2019}
K. Li and Z. Zhang,
Proof of the H\'enon--Lane--Emden conjecture in $\R^3$,
\emph{J. Differential Equations} \textbf{266} (2019), 202--226.

\bibitem{Mitidieri1993}
E. Mitidieri,
A Rellich type identity and applications,
\emph{Comm. Partial Differential Equations} \textbf{18} (1993), 125--151.

\bibitem{Mitidieri1996}
E. Mitidieri,
Nonexistence of positive solutions of semilinear elliptic systems in $\R^N$,
\emph{Differential Integral Equations} \textbf{9} (1996), 465--479.

\bibitem{MitidieriPohozaev2001}
E. Mitidieri and S. I. Pohozaev,
A priori estimates and the absence of solutions of nonlinear partial differential equations and inequalities,
\emph{Tr. Mat. Inst. Steklova} \textbf{234} (2001), 1--384.

\bibitem{OpenAI2026}
OpenAI,
\emph{The Subcritical H\'enon--Lane--Emden Conjecture},
preprint, September 24, 2026.
\url{https://github.com/openai/math/blob/main/preprints/The-Subcritical-Henon-Lane-Emden-Conjecture-September-24-2026/paper.pdf}.

\bibitem{Phan2012}
Q. H. Phan,
Liouville-type theorems and bounds of solutions for Hardy--H\'enon elliptic systems,
\emph{Adv. Differential Equations} \textbf{17} (2012), 605--634.

\bibitem{PolacikQuittnerSouplet2007}
P. Pol\'a\v{c}ik, P. Quittner, and P. Souplet,
Singularity and decay estimates in superlinear problems via Liouville-type theorems. Part I: Elliptic equations and systems,
\emph{Duke Math. J.} \textbf{139} (2007), 555--579.

\bibitem{QuaasXia2014}
A. Quaas and A. Xia,
Liouville type theorems for nonlinear elliptic equations and systems involving fractional Laplacian in the half space,
\emph{Calc. Var. Partial Differential Equations} \textbf{52} (2014), 641--659.

\bibitem{QuaasXia2016}
A. Quaas and A. Xia,
A Liouville type theorem for Lane--Emden systems involving the fractional Laplacian,
\emph{Nonlinearity} \textbf{29} (2016), 2279--2297.

\bibitem{RosOton2016}
X. Ros-Oton,
Nonlocal elliptic equations in bounded domains: a survey,
\emph{Publ. Mat.} \textbf{60} (2016), 3--26.

\bibitem{RosOtonSerra2014a}
X. Ros-Oton and J. Serra,
The Pohozaev identity for the fractional Laplacian,
\emph{Arch. Ration. Mech. Anal.} \textbf{213} (2014), 587--628.

\bibitem{RosOtonSerra2014b}
X. Ros-Oton and J. Serra,
The Dirichlet problem for the fractional Laplacian: regularity up to the boundary,
\emph{J. Math. Pures Appl.} \textbf{101} (2014), 275--302.

\bibitem{RosOtonSerra2015}
X. Ros-Oton and J. Serra,
Nonexistence results for nonlocal equations with critical and supercritical nonlinearities,
\emph{Comm. Partial Differential Equations} \textbf{40} (2015), 115--133.

\bibitem{SerrinZou1996}
J. Serrin and H. Zou,
Non-existence of positive solutions of Lane--Emden systems,
\emph{Differential Integral Equations} \textbf{9} (1996), 635--653.

\bibitem{Silvestre2007}
L. Silvestre,
Regularity of the obstacle problem for a fractional power of the Laplace operator,
\emph{Comm. Pure Appl. Math.} \textbf{60} (2007), 67--112.

\bibitem{Souplet2009}
P. Souplet,
The proof of the Lane--Emden conjecture in four space dimensions,
\emph{Adv. Math.} \textbf{221} (2009), 1409--1427.

\bibitem{SteinWeiss1958}
E. M. Stein and G. Weiss,
Fractional integrals on $n$-dimensional Euclidean space,
\emph{J. Math. Mech.} \textbf{7} (1958), 503--514.

\bibitem{Villavert2015}
J. Villavert,
Sharp existence criteria for positive solutions of Hardy--Sobolev type systems,
\emph{Commun. Pure Appl. Anal.} \textbf{14} (2015), 493--515.

\bibitem{ZhuoChenCuiYuan2014}
R. Zhuo, W. Chen, X. Cui, and Z. Yuan,
A Liouville theorem for the fractional Laplacian,
\emph{arXiv:1401.7402} (2014).

\end{thebibliography}
\end{document}